\newtheorem{theorem}{Theorem}
\newtheorem{corollary}[theorem]{Corollary}
\newtheorem{definition}{Definition}
\newtheorem{example}{Example}
\newtheorem{lemma}[theorem]{Lemma}
\newtheorem{proposition}[theorem]{Proposition}
\newtheorem{remark}{Remark}
\begin{document}
\title{Ultraregular generalized functions }
\author{ Khaled BENMERIEM}
\address{University of Mascara, Algeria.}
\email{benmeriemkhaled@yahoo.fr}
\author{ Chikh BOUZAR}
\address{Oran-Essenia University, Algeria.}
\email{bouzar@yahoo.com}
\date{}
\subjclass[2000]{Primary 46F30, secondary 35A18, 46F10}
\keywords{Ultradifferentiable functions, Denjoy-Carleman classes, Product of
distributions, Generalized functions, Colombeau algebra, Wave front,
Microlocal analysis, Gevrey generalized functions}

\begin{abstract}
Algebras of ultradifferentiable generalized functions are introduced. We
give a microlocal analysis within these algebras related to the regularity
type and the ultradifferentiable property.
\end{abstract}

\maketitle

\section{Introduction}

The introduction by J.\ F. Colombeau of the algebra of generalized functions
$\mathcal{G}\left( \Omega \right) ,$ see \cite{Col}, containing the space of
distributions as a subspace and having the algebra of smooth functions as a
subalgebra, has initiated different directions of research in the field of
differential algebras of generalized functions, see \cite{AntRod}, \cite%
{GKOS}, \cite{NPS}, \cite{Ober} and \cite{benbou2}. \medskip

The current research of the regularity problem in algebras of generalized
functions of Colombeau type \ is based on the Oberguggenberger subalgebra $%
\mathcal{G}^{\infty }\left( \Omega \right) ,$ see \cite{Del}, \cite{HorKun},
\cite{HorObPil} and \cite{Marti}. This subalgebra plays the same role as $%
\mathcal{C}^{\infty }\left( \Omega \right) $\ in $\mathcal{D}^{\prime
}\left( \Omega \right) ,$\ and has indicated the importance of the
asymptotic behavior of the representative nets of a Colombeau generalized
function in studying regularity problems. However, the $\mathcal{G}^{\infty
}-$regularity does not exhaust the regularity questions inherent to the
Colombeau algebra $\mathcal{G}\left( \Omega \right) ,$ see \cite{Ober2}%
.\medskip

The purpose of this work is to introduce and to study new algebras of
generalized functions of Colombeau type, denoted by $\mathcal{G}^{M,\mathcal{%
R}}\left( \Omega \right) ,$ measuring regularity both by the asymptotic
behavior of the nets of smooth functions representing a Colombeau
generalized function and by their ultradifferentiable smoothness of
Denjoy-Carleman type $M=\left( M_{p}\right) _{p\in \mathbb{Z}_{+}}$.\
Elements of $\mathcal{G}^{M,\mathcal{R}}\left( \Omega \right) $\ are called
ultraregular generalized functions.\medskip

This paper is the composition of our two papers \cite{benbou3} and \cite%
{benbou1}.

\newpage

\section{Regular generalized functions}

Let $\Omega $ be a non void open subset of $\mathbb{R}^{n}$, define $%
\mathcal{X}\left( \Omega \right) $ as the space of elements $\left(
u_{\varepsilon }\right) _{\varepsilon }\ $of $C^{\infty }\left( \Omega
\right) ^{\left] 0,1\right] }$ such that, for every compact set $K\subset
\Omega $, $\forall \alpha \in \mathbb{Z}_{+}^{n},\exists m\in \mathbb{Z}_{+}$%
, $\exists C>0,\exists \eta \in \left] 0,1\right] ,\forall \varepsilon \in %
\left] 0,\eta \right] ,$%
\begin{equation*}
\sup\limits_{x\in K}\left\vert \partial ^{\alpha }u_{\varepsilon }\left(
x\right) \right\vert \leq C\varepsilon ^{-m}.
\end{equation*}%
By $\mathcal{N}\left( \Omega \right) $ we denote the elements $\left(
u_{\varepsilon }\right) _{\varepsilon }\in \mathcal{X}\left( \Omega \right) $
such that for every compact set $K\subset \Omega ,\forall \alpha \in \mathbb{%
Z}_{+}^{n},\forall m\in \mathbb{Z}_{+}$, $\exists C>0,\exists \eta \in \left]
0,1\right] ,\forall \varepsilon \in \left] 0,\eta \right] ,$%
\begin{equation*}
\sup\limits_{x\in K}\left\vert \partial ^{\alpha }u_{\varepsilon }\left(
x\right) \right\vert \leq C\varepsilon ^{m}.
\end{equation*}

\begin{definition}
The Colombeau algebra, denoted by $\mathcal{G}\left( \Omega \right) $, is
the quotient algebra
\begin{equation*}
\mathcal{G}\left( \Omega \right) =\frac{\mathcal{X}\left( \Omega \right) }{%
\mathcal{N}\left( \Omega \right) }.
\end{equation*}
\end{definition}

$\mathcal{G}\left( \Omega \right) $ is a commutative and associative
differential algebra containing $\mathcal{D}^{\prime }\left( \Omega \right) $
as a subspace and $C^{\infty }\left( \Omega \right) $ as a subalgebra. The
subalgebra of generalized functions with compact support, denoted $\mathcal{G%
}_{C}\left( \Omega \right) ,$ is the space of elements $f$ of $\mathcal{G}%
\left( \Omega \right) $ satisfying : there exist a representative $\left(
f_{\varepsilon }\right) _{\varepsilon \in \left] 0,1\right] }$\ of $f$\ and
a compact subset $K$ of $\Omega ,\forall \varepsilon \in \left] 0,1\right] ,$
$\mathrm{supp}f_{\varepsilon }\subset K.$

One defines the subalgebra of regular elements $\mathcal{G}^{\infty }\left(
\Omega \right) $, introduced by Oberguggenberger in \cite{Ober}, as the
quotient algebra
\begin{equation*}
\frac{\mathcal{X}^{\infty }\left( \Omega \right) }{\mathcal{N}\left( \Omega
\right) },
\end{equation*}%
where $\mathcal{X}^{\infty }\left( \Omega \right) $ is the space of elements
$\left( u_{\varepsilon }\right) _{\varepsilon }\ $of $C^{\infty }\left(
\Omega \right) ^{\left] 0,1\right] }$ such that, for every compact $K\subset
\Omega $, $\exists m\in \mathbb{Z}_{+},\forall \alpha \in \mathbb{Z}_{+}^{n}$%
, $\exists C>0,\exists \eta \in \left] 0,1\right] ,\forall \varepsilon \in %
\left] 0,\eta \right] ,$%
\begin{equation*}
\sup\limits_{x\in K}\left\vert \partial ^{\alpha }u_{\varepsilon }\left(
x\right) \right\vert \leq C\varepsilon ^{-m}.
\end{equation*}%
It is proved in \cite{Ober} the following fundamental result%
\begin{equation*}
\mathcal{G}^{\infty }\left( \Omega \right) \cap \mathcal{D}^{\prime }\left(
\Omega \right) =C^{\infty }\left( \Omega \right) .
\end{equation*}%
This means that the subalgebra $\mathcal{G}^{\infty }\left( \Omega \right) $%
\ plays in $\mathcal{G}\left( \Omega \right) $\ the same role as $C^{\infty
}\left( \Omega \right) $\ in $\mathcal{D}^{\prime }\left( \Omega \right) ,$
consequently one can introduce a local analysis by defining the generalized
singular support of $u\in \mathcal{G}\left( \Omega \right) .$\ This was the
first notion of regularity in Colombeau algebra.

Recently, different measures of regularity in algebras of generalized
functions have been proposed, see \cite{Del}, \cite{Marti} and \cite{Ober2}.
For our needs we recall the essential definitions and results on $\mathcal{R}
$-regular generalized functions, see \cite{Del}.

\begin{definition}
Let $\left( N_{m}\right) _{m\in \mathbb{Z}_{+}},\left( N_{m}^{\prime
}\right) _{m\in \mathbb{Z}_{+}}$ be two elements of $\mathbb{R}_{+}^{\mathbb{%
Z}_{+}}$, we write $\left( N_{m}\right) _{m\in \mathbb{Z}_{+}}\leq \left(
N_{m}^{\prime }\right) _{m\in \mathbb{Z}_{+}},$ if $\forall m\in \mathbb{Z}%
_{+}$, $N_{m}\leq N_{m}^{\prime }$. A non void subspace $\mathcal{R}$ of $%
\mathbb{R}_{+}^{\mathbb{Z}_{+}}$ is \ said regular, if the following
(R1)-(R3) are all satisfied :

For all $\left( N_{m}\right) _{m\in \mathbb{Z}_{+}}\in \mathcal{R}$ and $%
\left( k,k^{\prime }\right) \in \mathbb{Z}_{+}^{2},$ there exists $\left(
N_{m}^{\prime }\right) _{m\in \mathbb{Z}_{+}}\in \mathcal{R}$ such that
\begin{equation}
N_{m+k}+k^{\prime }\leq N_{m}^{\prime }\text{ , }\forall m\in \mathbb{Z}_{+}.
\tag{R1}  \label{c1}
\end{equation}

For all $\left( N_{m}\right) _{m\in \mathbb{Z}_{+}}$ and $\left(
N_{m}^{\prime }\right) _{m\in \mathbb{Z}_{+}}$ in $\mathcal{R}$ $,$ there
exists $\left( N"_{m}\right) _{m\in \mathbb{Z}_{+}}\in \mathcal{R}$ such
that
\begin{equation}
\max \left( N_{m},N_{m}^{\prime }\right) \leq N"_{m}\mathcal{\ }\text{, }%
\forall m\in \mathbb{Z}_{+}.  \tag{R2}  \label{c2}
\end{equation}

For all $\left( N_{m}\right) _{m\in \mathbb{Z}_{+}}$ and $\left(
N_{m}^{\prime }\right) _{m\in \mathbb{Z}_{+}}$ in $\mathcal{R}$ $,$ there
exists $\left( N"_{m}\right) _{m\in \mathbb{Z}_{+}}\in \mathcal{R}$ such
that
\begin{equation}
N_{l_{1}}+N_{l_{2}}^{\prime }\leq N"_{l_{1}+l_{2}}\text{ , }\forall \left(
l_{1},l_{2}\right) \in \mathbb{Z}_{+}^{2}.  \tag{R3}  \label{c3}
\end{equation}
\end{definition}

Define the $\mathcal{R}-$regular moderate elements of $\mathcal{X}\left(
\Omega \right) ,$ by
\begin{equation*}
\begin{array}{l}
\mathcal{X}^{\mathcal{R}}\left( \Omega \right) =\left\{ \underset{}{\left(
u_{\varepsilon }\right) _{\varepsilon }}\in \mathcal{X}\left( \Omega \right)
\mid \forall K\subset \subset \Omega ,\exists N\in \mathcal{R},\forall
\alpha \in \mathbb{Z}_{+}^{n},\right. \\
\multicolumn{1}{r}{\;\;\;\;\;\;\;\;\;\;\;\;\;\;\;\ \ \ \ \ \left. \exists
C>0,\exists \eta \in \left] 0,1\right] ,\forall \varepsilon \in \left]
0,\eta \right] :\sup\limits_{x\in K}\left\vert \partial ^{\alpha
}u_{\varepsilon }\left( x\right) \right\vert \leq C\varepsilon
^{-N_{\left\vert \alpha \right\vert }}\right\}}%
\end{array}%
\end{equation*}

and its ideal
\begin{equation*}
\begin{array}{l}
\mathcal{N}^{\mathcal{R}}\left( \Omega \right) =\left\{ \underset{}{\left(
u_{\varepsilon }\right) _{\varepsilon }}\in \mathcal{X}\left( \Omega \right)
\mid \forall K\subset \subset \Omega ,\forall N\in \mathcal{R},\forall
\alpha \in \mathbb{Z}_{+}^{n},\right. \\
\multicolumn{1}{r}{\;\;\;\;\;\;\;\;\;\;\;\;\;\;\;\ \ \ \ \ \left. \exists
C>0,\exists \eta \in \left] 0,1\right] ,\forall \varepsilon \in \left]
0,\eta \right] :\sup\limits_{x\in K}\left\vert \partial ^{\alpha
}u_{\varepsilon }\left( x\right) \right\vert \leq C\varepsilon
^{N_{\left\vert \alpha \right\vert }}\right\} .}%
\end{array}%
\end{equation*}

\begin{proposition}
1. The space $\mathcal{X}^{\mathcal{R}}\left( \Omega \right) $ is a
subalgebra of $\mathcal{X}\left( \Omega \right) $, stable by differentiation.

2. The set $\mathcal{N}^{\mathcal{R}}\left( \Omega \right) $ is an ideal of $%
\mathcal{X}^{\mathcal{R}}\left( \Omega \right) .$
\end{proposition}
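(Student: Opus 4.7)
The plan is to verify each closure property by invoking exactly one of the axioms (R1)--(R3). For part 1, consider $(u_{\varepsilon}),(v_{\varepsilon})\in\mathcal{X}^{\mathcal{R}}(\Omega)$ with associated sequences $N,N'\in\mathcal{R}$ (depending on the compact set $K$). For the sum $(u_{\varepsilon}+v_{\varepsilon})$, I apply axiom (R2) to produce $N''\in\mathcal{R}$ majorizing both sequences termwise, yielding the bound $C\varepsilon^{-N''_{|\alpha|}}$ from the triangle inequality. For the product, expanding via Leibniz gives terms of the form $\binom{\alpha}{\beta}\,\partial^{\beta}u_{\varepsilon}\,\partial^{\alpha-\beta}v_{\varepsilon}$ each bounded by $C\varepsilon^{-N_{|\beta|}-N'_{|\alpha-\beta|}}$, and axiom (R3) supplies $N''\in\mathcal{R}$ with $N_{|\beta|}+N'_{|\alpha-\beta|}\leq N''_{|\alpha|}$, absorbing every term into a common bound. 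For stability under differentiation, note that $\partial^{\alpha}\partial^{\gamma}u_{\varepsilon}=\partial^{\alpha+\gamma}u_{\varepsilon}$ has bound $C\varepsilon^{-N_{|\alpha|+|\gamma|}}$; axiom (R1) with $k=|\gamma|$ and $k'=0$ yields $N'\in\mathcal{R}$ satisfying $N_{m+|\gamma|}\leq N'_{m}$, which is exactly what is required.

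For part 2, I would first observe the inclusion $\mathcal{N}^{\mathcal{R}}\subset\mathcal{X}^{\mathcal{R}}$: since each $N\in\mathcal{R}$ has non-negative entries, any bound $C\varepsilon^{N_{|\alpha|}}$ for $\varepsilon\in(0,1]$ automatically gives $\leq C\varepsilon^{-N_{|\alpha|}}$ with the same $N$. Closure of $\mathcal{N}^{\mathcal{R}}$ under sums is immediate because the quantifier on $N$ is now universal: the same $N$ may be used for both nets. The delicate point is absorption: given $(u_{\varepsilon})\in\mathcal{N}^{\mathcal{R}}$ and $(v_{\varepsilon})\in\mathcal{X}^{\mathcal{R}}$ with moderate bound $N^{v}\in\mathcal{R}$, I fix $N\in\mathcal{R}$ and $\alpha$ and seek $N^{u}\in\mathcal{R}$ such that $N^{u}_{j}\geq N_{|\alpha|}+N^{v}_{|\alpha|-j}$ for each $j\in\{0,\dots,|\alpha|\}$. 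The right-hand side ranges over a finite set of non-negative reals bounded by some $L_{\alpha}$; starting from any $N^{0}\in\mathcal{R}$ and applying (R1) with $k=0$ and an integer $k'\geq L_{\alpha}$, I obtain $N^{u}\in\mathcal{R}$ with $N^{u}_{m}\geq k'\geq L_{\alpha}$ for all $m$. Using this $N^{u}$ in the $\mathcal{N}^{\mathcal{R}}$ bound for $u$ and expanding $\partial^{\alpha}(u_{\varepsilon}v_{\varepsilon})$ by Leibniz then gives the required bound $C\varepsilon^{N_{|\alpha|}}$.

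The main obstacle I anticipate is precisely the absorption step: sequences in $\mathcal{R}$ are not assumed monotone, so one cannot simply majorize $N^{v}_{|\alpha-\beta|}$ by $N^{v}_{|\alpha|}$. The resolution exploits that $\alpha$ is fixed and $|\beta|$ ranges over a finite set, so the finite maximum $L_{\alpha}$ exists as a single real number, after which axiom (R1) manufactures the desired $N^{u}\in\mathcal{R}$. Everything else reduces to direct Leibniz computations together with a single application of one of the axioms; the constants $C$ and the threshold $\eta$ are handled by taking maxima and minima over the finitely many multi-indices $\beta\leq\alpha$ involved.
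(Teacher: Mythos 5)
Your proof is correct. The paper itself states this proposition without proof (it is quoted from Delcroix), but the closely parallel Proposition 7 for $\mathcal{X}^{M,\mathcal{R}}(\Omega)$ is proved there, and your part 1 follows exactly that template: (R2) for sums, Leibniz plus (R3) for products, and (R1) with $k=|\gamma|$, $k'=0$ for derivatives. Where you genuinely diverge is part 2. The paper's route is structural: the remark following the proposition identifies $\mathcal{N}^{\mathcal{R}}(\Omega)=\mathcal{N}(\Omega)$ (using (R1) with $k=0$ to manufacture sequences in $\mathcal{R}$ exceeding any prescribed integer $m$), and then the ideal property is inherited from the standard fact that $\mathcal{N}(\Omega)$ is an ideal of $\mathcal{X}(\Omega)\supset\mathcal{X}^{\mathcal{R}}(\Omega)$. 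You instead verify absorption directly, and your key move --- fixing $N$ and $\alpha$, bounding the finitely many quantities $N_{|\alpha|}+N^{v}_{|\alpha|-j}$ by $L_{\alpha}$, and invoking (R1) with $k=0$, $k'\geq L_{\alpha}$ to produce $N^{u}\in\mathcal{R}$ with all entries at least $L_{\alpha}$ --- is exactly the mechanism hiding inside the inclusion $\mathcal{N}(\Omega)\subset\mathcal{N}^{\mathcal{R}}(\Omega)$, so in effect you re-prove the relevant half of the remark inline. Your version is self-contained and makes the role of (R1) explicit; the paper's version is shorter and yields the stronger statement $\mathcal{N}^{\mathcal{R}}(\Omega)=\mathcal{N}(\Omega)$, which is reused later (e.g.\ in defining $\mathcal{G}^{\mathcal{R}}(\Omega)$ as a quotient by $\mathcal{N}(\Omega)$). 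You correctly identified and resolved the one genuine subtlety, namely that sequences in $\mathcal{R}$ need not be monotone, so $N^{v}_{|\alpha-\beta|}$ cannot simply be replaced by $N^{v}_{|\alpha|}$.
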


\begin{remark}
From (\ref{c1}), one can show that $\mathcal{N}^{\mathcal{R}}\left( \Omega
\right) =\mathcal{N}\left( \Omega \right) .$
\end{remark}

\begin{definition}
The algebra of $\mathcal{R}$-regular generalized functions, denoted by $%
\mathcal{G}^{\mathcal{R}}\left( \Omega \right) ,$ is the quotient algebra
\begin{equation*}
\mathcal{G}^{\mathcal{R}}\left( \Omega \right) =\frac{\mathcal{X}^{\mathcal{R%
}}\left( \Omega \right) }{\mathcal{N}\left( \Omega \right) }.
\end{equation*}
\end{definition}

\begin{example}
The Colombeau algebra $\mathcal{G}\left( \Omega \right) $ is obtained when $%
\mathcal{R}=\mathbb{R}_{+}^{\mathbb{Z}_{+}},$ i.e. $\mathcal{G}^{\mathbb{R}%
_{+}^{\mathbb{Z}_{+}}}\left( \Omega \right) =\mathcal{G}\left( \Omega
\right) .$
\end{example}

\begin{example}
When
\begin{equation*}
\mathcal{A}=\left\{ \left( N_{m}\right) _{m\in \mathbb{Z}_{+}}\in \mathcal{R}%
:\exists a\geq 0,\exists b\geq 0,N_{m}\leq am+b,\forall m\in \mathbb{Z}%
_{+}\right\} ,
\end{equation*}
we obtain a differential subalgebra denoted by $\mathcal{G}^{\mathcal{A}%
}\left( \Omega \right) $.
\end{example}

\begin{example}
When $\mathcal{R}=\mathcal{B}$ the set of all bounded sequences, we obtain
the Oberguggenberger subalgebra, i.e. $\mathcal{G}^{\mathcal{B}}\left(
\Omega \right) =\mathcal{G}^{\infty }\left( \Omega \right) $.
\end{example}

\begin{example}
If we take $\mathcal{R}=\left\{ 0\right\} $, the condition (\ref{c1}) is not
hold, however we can define
\begin{equation*}
\mathcal{G}^{0}\left( \Omega \right) =\frac{\mathcal{X}^{\mathcal{R}}\left(
\Omega \right) }{\mathcal{N}\left( \Omega \right) }
\end{equation*}%
as the algebra of elements which have all derivatives locally uniformly
bounded for small $\varepsilon $, see \cite{Ober2}.
\end{example}

We have the following inclusions%
\begin{equation*}
\mathcal{G}^{0}\left( \Omega \right) \subset \mathcal{G}^{\mathcal{B}}\left(
\Omega \right) \subset \mathcal{G}^{\mathcal{A}}\left( \Omega \right)
\subset \mathcal{G}\left( \Omega \right) .
\end{equation*}

\section{Ultradifferentiable functions}

We recall some classical results on ultradifferentiable functions spaces. A
sequence of positive numbers $\left( M_{p}\right) _{p\in \mathbb{Z}_{+}}$ is
said to satisfy the following conditions:

$\left( H1\right) $ Logarithmic convexity, if
\begin{equation*}
M_{p}^{2}\leq M_{p-1}M_{p+1},\text{ }\forall p\geq 1.
\end{equation*}

$\left( H2\right) $ Stability under ultradifferential operators, if there
are constants $A>0$ and $H>0$ such that
\begin{equation*}
M_{p}\leq AH^{p}M_{q}M_{p-q},\forall p\geq q.
\end{equation*}

$\left( H3\right) ^{\prime }$ Non-quasi-analyticity, if
\begin{equation*}
\sum\limits_{p=1}^{\infty }\dfrac{M_{p-1}}{M_{p}}<+\infty .
\end{equation*}

\begin{remark}
Some results remain valid, see \cite{Kom}, when $\left( H2\right) $ is
replaced by the following weaker condition :

$\left( H2\right) ^{\prime }$ Stability under differential operators, if
there are constants $A>0$ and $H>0$ such that
\begin{equation*}
M_{p+1}\leq AH^{p}M_{p},\forall p\in \mathbb{Z}_{+}.
\end{equation*}
\end{remark}

The associated function of the sequence $\left( M_{p}\right) _{p\in \mathbb{Z%
}_{+}}$ is the function defined by
\begin{equation*}
\widetilde{M}\left( t\right) =\sup_{p}\ln \frac{t^{p}}{M_{p}},t\in \mathbb{R}%
_{+}^{\ast }.
\end{equation*}%
Some needed results of the associated function are given in the following
propositions proved in \cite{Kom}.

\begin{proposition}
\label{pro1}A positive sequence $\left( M_{p}\right) _{p\in \mathbb{Z}_{+}}$
satisfies condition $\left( H1\right) $ if and and only if
\begin{equation*}
M_{p}=M_{0}\sup_{t>0}\frac{t^{p}}{\exp \left( \widetilde{M}\left( t\right)
\right) },\text{ \ \ \ }p=0,1,...
\end{equation*}
\end{proposition}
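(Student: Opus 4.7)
The plan is to recognise the identity as a Legendre-duality statement between the sequence $(\ln M_p)_p$ and the associated function $\widetilde{M}$, and to treat the two implications separately.

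For the ``only if'' direction, the easy half is immediate: for every $t>0$ and every $p$, the very definition of $\widetilde{M}(t)$ gives $\widetilde{M}(t)\ge\ln(t^p/M_p)$, hence $t^p/\exp(\widetilde{M}(t))\le M_p$, and taking the supremum over $t$ yields one inequality. The substantive half is to produce, for each $p$, a range of $t$ on which $\widetilde{M}(t)=\ln(t^p/M_p)$ exactly. Writing $r_q:=M_q/M_{q-1}$, hypothesis $(H1)$ is precisely the statement that $(r_q)_{q\ge 1}$ is non-decreasing. For fixed $t>0$, the map $q\mapsto q\ln t-\ln M_q$ has discrete increments $\ln t-\ln r_q$; monotonicity of $(r_q)$ forces these increments to change sign at most once, so the supremum is attained at an integer, and a short telescoping argument identifies the maximiser as $q=p$ for every $t\in[r_p,r_{p+1}]$. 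On this interval one has $t^p/\exp(\widetilde{M}(t))=M_p$, which pushes the supremum up to $M_p$ and, combined with the easy half, forces equality.

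For the ``if'' direction, taking logarithms in the proposed identity yields
\begin{equation*}
\ln M_p=\ln M_0+\sup_{t>0}\bigl(p\ln t-\widetilde{M}(t)\bigr),
\end{equation*}
exhibiting $p\mapsto\ln M_p$ as a pointwise supremum of affine functions of $p$, hence convex. Convexity evaluated at three consecutive integers is exactly $M_p^2\le M_{p-1}M_{p+1}$, which is $(H1)$.

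The only step I expect to require care is the telescoping in the forward direction, especially the boundary case $p=0$ (on the initial interval $(0,r_1]$ the supremum is attained at $q=0$, giving $\widetilde{M}(t)=-\ln M_0$ and reproducing the value $M_0$) and the tail behaviour as $t\to+\infty$. Once the attainment of the supremum on each slab $[r_p,r_{p+1}]$ is pinned down by the monotonicity of the ratios $r_q$, the remainder of the argument is mechanical and follows Komatsu's original line of reasoning.
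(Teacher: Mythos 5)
The paper itself gives no proof of this proposition---it is quoted from Komatsu's paper \cite{Kom}---and your argument is correct and is essentially that standard proof: the easy inequality from the definition of $\widetilde{M}$, attainment of the supremum at $q=p$ for $t\in[r_p,r_{p+1}]$ using the monotonicity of the ratios $r_q=M_q/M_{q-1}$ (which is exactly $(H1)$), and, conversely, log-convexity of $p\mapsto\ln M_p$ because it is exhibited as a supremum of affine functions of $p$. The only point worth noting is a normalization quirk of the paper rather than of your proof: with $\widetilde{M}(t)=\sup_p\ln\bigl(t^p/M_p\bigr)$ (no $M_0$ inside, unlike Komatsu's associated function) the stated identity with the prefactor $M_0$ is correct only when $M_0=1$, which the paper assumes throughout and which your boundary check at $p=0$ already makes visible.
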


\begin{proposition}
\label{pro2}A positive sequence $\left( M_{p}\right) _{p\in \mathbb{Z}_{+}}$
satisfies condition $\left( H2\right) $ if and and only if , $\exists
A>0,\exists H>0,\forall t>0,$%
\begin{equation}
2\widetilde{M}\left( t\right) \leq \widetilde{M}\left( Ht\right) +\ln \left(
AM_{0}\right) .  \label{(KomIneq)}
\end{equation}
\end{proposition}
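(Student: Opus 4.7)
The plan is to prove both implications by exploiting the multiplicative structure hidden in the supremum defining $\widetilde{M}$, combined with the inversion formula of Proposition~\ref{pro1}.

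For the forward direction, I would start from $(H2)$ written in the symmetric form $M_{q_{1}+q_{2}}\leq AH^{q_{1}+q_{2}}M_{q_{1}}M_{q_{2}}$, divide $(Ht)^{q_{1}+q_{2}}$ by both sides, and get
\[
\frac{(Ht)^{q_{1}+q_{2}}}{M_{q_{1}+q_{2}}}\;\geq\;\frac{1}{A}\cdot\frac{t^{q_{1}}}{M_{q_{1}}}\cdot\frac{t^{q_{2}}}{M_{q_{2}}}.
\]
Passing to logarithms, the left-hand side is bounded by $\widetilde{M}(Ht)$ for every choice of $q_{1},q_{2}\in\mathbb{Z}_{+}$, whereas the right-hand side can be maximised in $q_{1}$ and $q_{2}$ independently, giving $-\ln A+2\widetilde{M}(t)$. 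Rearranging yields $2\widetilde{M}(t)\leq\widetilde{M}(Ht)+\ln A$, and since specialising $(H2)$ at $q=0$ forces $AM_{0}\geq 1$, the required inequality $2\widetilde{M}(t)\leq\widetilde{M}(Ht)+\ln(AM_{0})$ follows.

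For the converse, I would invoke Proposition~\ref{pro1} to represent
\[
M_{q_{1}+q_{2}}\;=\;M_{0}\sup_{s>0}\frac{s^{q_{1}+q_{2}}}{\exp\bigl(\widetilde{M}(s)\bigr)},
\]
perform the change of variable $s=Ht$, and then apply the hypothesis in the equivalent form $\exp(\widetilde{M}(Ht))\geq \exp(2\widetilde{M}(t))/(AM_{0})$ to estimate
\[
\frac{(Ht)^{q_{1}+q_{2}}}{\exp(\widetilde{M}(Ht))}\;\leq\;H^{q_{1}+q_{2}}\,AM_{0}\cdot\frac{t^{q_{1}}}{\exp(\widetilde{M}(t))}\cdot\frac{t^{q_{2}}}{\exp(\widetilde{M}(t))}.
\]
Taking the supremum separately in each factor and invoking Proposition~\ref{pro1} again in the reverse direction collapses the right-hand side to $H^{q_{1}+q_{2}}\,A M_{q_{1}}M_{q_{2}}/M_{0}$, and multiplying by $M_{0}$ gives exactly $(H2)$, with the same constants $A$ and $H$.

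The main obstacle is that the $\Leftarrow$ step relies on being able to recover the sequence $(M_{p})$ from its associated function, which is Proposition~\ref{pro1} and, strictly speaking, requires logarithmic convexity $(H1)$. The forward direction, in contrast, is just the submultiplicative structure of $(H2)$ and needs no convexity assumption. If one wishes to avoid assuming $(H1)$ in the converse, the standard workaround is to pass to the logarithmic-convex minorant $(M_{p}^{c})$, which has the same associated function $\widetilde{M}$, prove $(H2)$ for $(M_{p}^{c})$, and then compare with $(M_{p})$; aside from this technical point, all remaining steps are routine manipulations of the Legendre-type transform.
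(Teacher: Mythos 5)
Your proof is correct, and it coincides with the standard argument: the paper itself does not prove this proposition but quotes it from Komatsu \cite{Kom}, where exactly your two steps are carried out — the forward implication by feeding the submultiplicative form of $\left( H2\right)$ into the supremum defining $\widetilde{M}$, and the converse by recovering $M_{p}$ from $\widetilde{M}$ through Proposition \ref{pro1}, which, as you rightly note, requires $\left( H1\right)$ — a standing assumption of the paper. The only blemish is your justification in the forward step: $AM_{0}\geq 1$ does not by itself give $\ln A\leq \ln \left( AM_{0}\right)$ when $M_{0}<1$; this is harmless here because the paper assumes $M_{0}=1$, and in any case the constants are existential, so one may simply replace $A$ by $A/M_{0}$.
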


\begin{remark}
We will always suppose that the sequence $\left( M_{p}\right) _{p\in \mathbb{%
Z}_{+}}$satisfies the condition $(H1)$\ and $M_{0}=1$.
\end{remark}

A differential operator of infinite order $P\left( D\right)
=\sum\limits_{\gamma \in \mathbb{Z}_{+}^{n}}a_{\gamma }D^{\gamma }$ is
called an ultradifferential operator of class $M=\left( M_{p}\right) _{p\in
\mathbb{Z}_{+}},$ if for every $h>0$ there exist a constant $c>0$ such that $%
\forall \gamma \in \mathbb{Z}_{+}^{n},$
\begin{equation}
\left\vert a_{\gamma }\right\vert \leq c\frac{h^{\left\vert \gamma
\right\vert }}{M_{\left\vert \gamma \right\vert }}.  \label{12}
\end{equation}

The class of ultradifferentiable functions of class $M$, denoted by $%
\mathcal{E}^{M}\left( \Omega \right) ,$ is the space of all $f\in C^{\infty
}\left( \Omega \right) $ satisfying for every compact subset $K$ of $\Omega $%
, $\exists c>0,\forall \alpha \in \mathbb{Z}_{+}^{n},$%
\begin{equation}
\sup_{x\in K}\left\vert \partial ^{\alpha }f\left( x\right) \right\vert \leq
c^{\left\vert \alpha \right\vert +1}M_{\left\vert \alpha \right\vert }.
\label{1}
\end{equation}%
This space is also called the space of Denjoy-Carleman$.$

\begin{example}
If $\left( M_{p}\right) _{p\in \mathbb{Z}_{+}}=\left( p!^{\sigma }\right)
_{p\in \mathbb{Z}_{+}},\sigma >1$, we obtain $\mathcal{E}^{\sigma }\left(
\Omega \right) $ the Gevrey space of order $\sigma $, and $\mathcal{A}\left(
\Omega \right) :=\mathcal{E}^{1}\left( \Omega \right) $ is the space of real
analytic functions defined on the open set $\Omega $.
\end{example}

The basic properties of the space $\mathcal{E}^{M}\left( \Omega \right) $\
are summarized in the following proposition, for the proof see \cite{Kom}.

\begin{proposition}
The space $\mathcal{E}^{M}\left( \Omega \right) $\ is an algebra. Moreover,
if $\left( M_{p}\right) _{p\in \mathbb{Z}_{+}}$ satisfies $\left( H2\right)
^{\prime },$ then $\mathcal{E}^{M}\left( \Omega \right) $ is stable by any
differential operator of finite order with coefficients in $\mathcal{E}%
^{M}\left( \Omega \right) $ and if $\left( M_{p}\right) _{p\in \mathbb{Z}%
_{+}}$ satisfies $\left( H2\right) $ then any ultradifferential operator of
class $M$ operates also as a sheaf homomorphism. The space $\mathcal{D}%
^{M}\left( \Omega \right) =\mathcal{E}^{M}\left( \Omega \right) \cap
\mathcal{D}\left( \Omega \right) $ is well defined and is not trivial if and
only if the sequence $\left( M_{p}\right) _{p\in \mathbb{Z}_{+}}$ satisfies $%
\left( H3\right) ^{\prime }.$
\end{proposition}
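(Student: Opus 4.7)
The plan is to dispatch the four assertions in order, using the Leibniz rule together with the structural conditions on $(M_{p})$.

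For the algebra property, take $f,g\in\mathcal{E}^{M}(\Omega)$ with bounds $\sup_{K}|\partial^{\beta}f|\leq c_{1}^{|\beta|+1}M_{|\beta|}$ and $\sup_{K}|\partial^{\beta}g|\leq c_{2}^{|\beta|+1}M_{|\beta|}$ on a compact $K\subset\Omega$. The Leibniz formula yields
\[
\sup_{K}|\partial^{\alpha}(fg)|\leq \sum_{\beta\leq\alpha}\binom{\alpha}{\beta} c_{1}^{|\beta|+1}c_{2}^{|\alpha-\beta|+1} M_{|\beta|}M_{|\alpha-\beta|},
\]
and the standard consequence of $(H1)$ combined with $M_{0}=1$, namely $M_{p}M_{q}\leq M_{p+q}$ (which follows because $M_{p}/M_{p-1}$ is non-decreasing), together with $\sum_{\beta}\binom{\alpha}{\beta}=2^{|\alpha|}$, produces a bound of the required form $C^{|\alpha|+1}M_{|\alpha|}$.

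For stability under finite-order differential operators with coefficients in $\mathcal{E}^{M}(\Omega)$, the algebra property already handles multiplication by coefficients, so it suffices to check stability under each $\partial_{j}$. Applying $\partial^{\alpha}\partial_{j}$ to $f\in\mathcal{E}^{M}(\Omega)$ gives $\sup_{K}|\partial^{\alpha+e_{j}}f|\leq c^{|\alpha|+2}M_{|\alpha|+1}$, and $(H2)^{\prime}$ in the form $M_{|\alpha|+1}\leq AH^{|\alpha|}M_{|\alpha|}$ converts this into a bound of the admissible shape. The ultradifferential operator case is handled analogously but uses the stronger condition $(H2)$: for $P(D)=\sum_{\gamma}a_{\gamma}D^{\gamma}$ with $|a_{\gamma}|\leq c(h)h^{|\gamma|}/M_{|\gamma|}$ for every $h>0$, applying $\partial^{\alpha}$ to $a_{\gamma}D^{\gamma}f$ and using $M_{|\alpha|+|\gamma|}\leq AH^{|\alpha|+|\gamma|}M_{|\alpha|}M_{|\gamma|}$ gives a term dominated by a constant times $(Hch)^{|\gamma|}(Hc)^{|\alpha|}M_{|\alpha|}$. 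Choosing $h$ so small that $Hch<1/2$ guarantees absolute and uniform convergence of the series in $\gamma$ on $K$, with total bound of the required shape; the sheaf property is immediate from the locality of the defining estimates.

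The nontriviality of $\mathcal{D}^{M}(\Omega)$ is the classical Denjoy--Carleman theorem. The necessity direction is that failure of $(H3)^{\prime}$ makes $\mathcal{E}^{M}$ quasi-analytic, so no nonzero element of $\mathcal{E}^{M}$ can be compactly supported. For sufficiency one constructs a bump explicitly as an infinite convolution $\chi_{I_{1}}\ast\chi_{I_{2}}\ast\cdots$ of characteristic functions of intervals whose lengths $a_{n}$ are comparable to $M_{n-1}/M_{n}$; condition $(H3)^{\prime}$ makes the total length finite, yielding compact support, while direct differentiation of the convolution combined with $(H1)$ bounds the $n$-th derivative by $M_{n}$ up to a constant. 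The main obstacle in this plan lies precisely in this last construction: the quantitative bookkeeping in the convolution estimate, and its compatibility with $(H1)$, is the delicate part, for which I would invoke the treatment in \cite{Kom} rather than reproduce it.
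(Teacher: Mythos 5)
Your sketch is essentially correct, but note that the paper does not prove this proposition at all: it is stated as a summary of classical facts with the proof delegated wholesale to Komatsu's paper (\cite{Kom}). What you do differently is make the first three assertions self-contained: the Leibniz estimate combined with $M_pM_q\leq M_{p+q}$ (valid under $(H1)$ with $M_0=1$, since $M_p/M_{p-1}$ is nondecreasing) for the algebra property, $(H2)^{\prime}$ for closure under $\partial_j$, and $(H2)$ plus the choice of $h$ with $Hch<1/2$ to sum the series $\sum_\gamma a_\gamma D^{\gamma}f$ absolutely and uniformly on each compact set — with the harmless caveat that $h$ is chosen per compact set, which is fine since the defining estimate is local. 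These are exactly the computations the paper later redoes, with $\varepsilon$-weights added, in its proof of Proposition \ref{pro3}, so your version buys a self-contained argument at no real cost, while the paper's citation buys brevity. For the last assertion you, like the paper, ultimately lean on the literature: the necessity direction invokes the Denjoy--Carleman quasi-analyticity theorem, and the sufficiency direction invokes the infinite-convolution bump construction (lengths $a_n\asymp M_{n-1}/M_n$, total length finite by $(H3)^{\prime}$, $n$-th derivative bounded by $2^n\prod_{j\leq n}a_j^{-1}\leq 2^nM_n$, which suffices for the Roumieu-type bound $c^{n+1}M_n$); deferring that bookkeeping to \cite{Kom} is precisely what the paper itself does, so no genuine gap remains relative to the paper's standard of proof.
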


\begin{remark}
The strong dual of $\mathcal{D}^{M}\left( \Omega \right) $, denoted $%
\mathcal{D}^{\prime M}\left( \Omega \right) ,$ is called the space of
Roumieu ultraditributions.
\end{remark}

\section{Ultraregular generalized functions}

In the same way as $\mathcal{G}^{\infty }\left( \Omega \right) ,$ $\mathcal{G%
}^{\mathcal{R}}\left( \Omega \right) $\ forms a sheaf of differential
subalgebras of $\mathcal{G}\left( \Omega \right) $,\ consequently one
defines the generalized\textit{\ }$\mathcal{R}-$singular support of $u\in
\mathcal{G}\left( \Omega \right) $, denoted by \textrm{singsupp}$_{\mathcal{R%
}}$\thinspace $u$, as the complement in $\Omega $ of the largest set $\Omega
^{\prime }$ such that $u_{/\Omega ^{\prime }}\in \mathcal{G}^{\mathcal{R}%
}\left( \Omega ^{\prime }\right) ,$ where $u_{/\Omega ^{\prime }}\ $means
the restriction of the generalized function $u$ on $\Omega ^{\prime }.$ This
new notion of regularity is linked with the asymptotic limited growth of
generalized functions. Our aim in this section is to introduce a more
precise notion of regularity within the Colombeau algebra taking into
account both the asymptotic growth and the smoothness property of
generalized functions. We introduce general algebras of ultradifferentiable $%
\mathcal{R}-$regular generalized functions of class $M$, where the sequence $%
M=(M_{p})_{p\in \mathbb{Z}_{+}}$ satisfies the conditions $\left( H1\right) $
with $M_{0}=1,\left( H2\right) $ and $\left( H3\right) ^{\prime }.$

\begin{definition}
The space of ultraregular moderate elements of class $M$, denoted $\mathcal{X%
}^{M,\mathcal{R}}\left( \Omega \right) ,$ is the space of $\left(
f_{\varepsilon }\right) _{\varepsilon }\in C^{\infty }\left( \Omega \right)
^{\left] 0,1\right] }$ satisfying for every compact $K$ of $\Omega $, $%
\exists N\in \mathcal{R},\exists C>0,\exists \varepsilon _{0}\in \left] 0,1%
\right] ,\forall \alpha \in \mathbb{Z}_{+}^{n},\forall \varepsilon \leq
\varepsilon _{0},$
\begin{equation}
\sup_{x\in K}\left\vert \partial ^{\alpha }f_{\varepsilon }\left( x\right)
\right\vert \leq C^{\left\vert \alpha \right\vert +1}M_{\left\vert \alpha
\right\vert }\varepsilon ^{-N_{\left\vert \alpha \right\vert }}.  \label{2}
\end{equation}%
The space of null elements is defined as $\mathcal{N}^{M,\mathcal{R}}\left(
\Omega \right) :=\mathcal{N}\left( \Omega \right) \cap \mathcal{X}^{M,%
\mathcal{R}}\left( \Omega \right) .$
\end{definition}

The main properties of the spaces $\mathcal{X}^{M,\mathcal{R}}\left( \Omega
\right) $ and $\mathcal{N}^{M,\mathcal{R}}\left( \Omega \right) $ are given
in the following proposition.

\begin{proposition}
\label{pro3}1) The space $\mathcal{X}^{M,\mathcal{R}}\left( \Omega \right) $
is a subalgebra of $\mathcal{X}\left( \Omega \right) $ stable by action of
differential operators

2) The space $\mathcal{N}^{M,\mathcal{R}}\left( \Omega \right) $ is an ideal
of $\mathcal{X}^{M,\mathcal{R}}\left( \Omega \right) .$
\end{proposition}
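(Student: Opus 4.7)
The plan is to verify each closure property in turn, using the structural axioms (R1)--(R3) on $\mathcal{R}$ together with the logarithmic convexity (H1) and stability (H2) of the weight sequence $M$. The inclusion $\mathcal{X}^{M,\mathcal{R}}(\Omega)\subset\mathcal{X}(\Omega)$ is immediate: for each fixed $\alpha$, the bound (\ref{2}) is an estimate of the form $C_{\alpha}\varepsilon^{-N_{|\alpha|}}$ with $N_{|\alpha|}\in\mathbb{Z}_+$, which is precisely moderateness.

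For closure under addition, take $(f_{\varepsilon}),(g_{\varepsilon})\in\mathcal{X}^{M,\mathcal{R}}(\Omega)$ with weight sequences $N,N'\in\mathcal{R}$ and constants $C_{1},C_{2}$. Axiom (R2) supplies $N''\in\mathcal{R}$ dominating both, and one can bound $|\partial^{\alpha}(f_{\varepsilon}+g_{\varepsilon})|$ by $C'^{|\alpha|+1}M_{|\alpha|}\varepsilon^{-N''_{|\alpha|}}$ with $C'$ a suitable majorant of $C_{1},C_{2}$ absorbing the factor $2$. The real content is the product rule: applying Leibniz gives
\begin{equation*}
|\partial^{\alpha}(f_{\varepsilon}g_{\varepsilon})|\leq\sum_{\beta\leq\alpha}\binom{\alpha}{\beta}C_{1}^{|\beta|+1}M_{|\beta|}\varepsilon^{-N_{|\beta|}}\,C_{2}^{|\alpha-\beta|+1}M_{|\alpha-\beta|}\varepsilon^{-N'_{|\alpha-\beta|}}.
\end{equation*}
The log-convexity (H1) with $M_{0}=1$ (standard consequence: the ratios $M_{p}/M_{p-1}$ are nondecreasing) yields $M_{|\beta|}M_{|\alpha-\beta|}\leq M_{|\alpha|}$; axiom (R3) supplies $N''\in\mathcal{R}$ with $N_{l_{1}}+N'_{l_{2}}\leq N''_{l_{1}+l_{2}}$, so $\varepsilon^{-N_{|\beta|}-N'_{|\alpha-\beta|}}\leq\varepsilon^{-N''_{|\alpha|}}$ for $\varepsilon\leq 1$; and the multinomial identity collapses $\sum_{\beta\leq\alpha}\binom{\alpha}{\beta}C_{1}^{|\beta|}C_{2}^{|\alpha-\beta|}$ to $(C_{1}+C_{2})^{|\alpha|}$. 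Choosing $\widetilde{C}:=C_{1}+C_{2}+C_{1}C_{2}$ rewrites the prefactor as $\widetilde{C}^{|\alpha|+1}$, which is the required form.

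For stability under a differential operator $\partial^{\gamma}$, one estimates $\partial^{\alpha}\partial^{\gamma}f_{\varepsilon}=\partial^{\alpha+\gamma}f_{\varepsilon}$ directly. Condition (H2) gives $M_{|\alpha|+|\gamma|}\leq AH^{|\alpha|+|\gamma|}M_{|\alpha|}M_{|\gamma|}$, and (R1) applied with $k=|\gamma|$ and $k'=0$ yields $N'\in\mathcal{R}$ with $N_{|\alpha|+|\gamma|}\leq N'_{|\alpha|}$. The resulting bound is $K\cdot(CH)^{|\alpha|}M_{|\alpha|}\varepsilon^{-N'_{|\alpha|}}$ where $K=AC^{|\gamma|+1}H^{|\gamma|}M_{|\gamma|}$ depends only on $\gamma$; absorbing $K$ into the base gives an estimate of the form (\ref{2}) with constant $C':=\max(K,CH)$.

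Finally, for part (2), since $\mathcal{N}^{M,\mathcal{R}}(\Omega)=\mathcal{N}(\Omega)\cap\mathcal{X}^{M,\mathcal{R}}(\Omega)$ is defined as an intersection and both factors are stable under addition, $\mathcal{N}^{M,\mathcal{R}}(\Omega)$ is an additive subgroup. If $(f_{\varepsilon})\in\mathcal{X}^{M,\mathcal{R}}(\Omega)$ and $(g_{\varepsilon})\in\mathcal{N}^{M,\mathcal{R}}(\Omega)$, then part (1) gives $(f_{\varepsilon}g_{\varepsilon})\in\mathcal{X}^{M,\mathcal{R}}(\Omega)$, while the classical fact that $\mathcal{N}(\Omega)$ is an ideal of $\mathcal{X}(\Omega)$ (together with the inclusion already established) gives $(f_{\varepsilon}g_{\varepsilon})\in\mathcal{N}(\Omega)$; hence the product lies in the intersection. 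The principal obstacle is the product estimate: one must simultaneously control the Denjoy--Carleman factor $M_{|\alpha|}$ (using (H1)), the asymptotic exponent (using (R3)), and the geometric constant (via the multinomial identity), so that the resulting bound genuinely has the form $C'^{|\alpha|+1}M_{|\alpha|}\varepsilon^{-N''_{|\alpha|}}$ rather than merely a sum of such terms.
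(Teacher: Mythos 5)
Your proof is correct and takes essentially the same approach as the paper: Leibniz's rule combined with $M_pM_q\le M_{p+q}$ from (H1), axiom (R3) for the $\varepsilon$-exponents and the binomial identity for the constants, with part (2) deduced from the intersection definition together with the ideal property of $\mathcal{N}\left( \Omega \right)$. The only cosmetic difference is that you treat a general derivative $\partial ^{\gamma }$ in one step via (H2) and (R1) with $k=\left\vert \gamma \right\vert$, whereas the paper checks first-order derivatives using $\left( H2\right) ^{\prime }$; this is an immaterial variation.
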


\begin{proof}
1) Let $\left( f_{\varepsilon }\right) _{\varepsilon },\left( g_{\varepsilon
}\right) _{\varepsilon }\in \mathcal{X}^{M,\mathcal{R}}\left( \Omega \right)
$ and $K$ a compact subset of $\Omega $, then

$\exists N\in \mathcal{R},\exists C_{1}>0,\exists \varepsilon _{1}\in \left]
0,1\right] ,$ such that $\forall \beta \in \mathbb{Z}_{+}^{n},\forall x\in
K,\forall \varepsilon \leq \varepsilon _{1},$%
\begin{equation}
\left\vert \partial ^{\beta }f_{\varepsilon }\left( x\right) \right\vert
\leq C_{1}^{\left\vert \beta \right\vert +1}M_{\left\vert \beta \right\vert
}\varepsilon ^{-N_{\left\vert \beta \right\vert }},  \label{4}
\end{equation}

$\exists N^{\prime }\in \mathcal{R},\exists C_{2}>0,\exists \varepsilon
_{2}\in \left] 0,1\right] ,$ such that $\forall \beta \in \mathbb{Z}%
_{+}^{n},\forall x\in K,\forall \varepsilon \leq \varepsilon _{2},$%
\begin{equation}
\left\vert \partial ^{\beta }g_{\varepsilon }\left( x\right) \right\vert
\leq C_{2}^{\left\vert \beta \right\vert +1}M_{\left\vert \beta \right\vert
}\varepsilon ^{-N_{\left\vert \beta \right\vert }^{\prime }}.  \label{5}
\end{equation}%
It clear from (\ref{c2}) that $\left( f_{\varepsilon }+g_{\varepsilon
}\right) _{\varepsilon }\in \mathcal{X}^{M,\mathcal{R}}\left( \Omega \right)
.$ Let $\alpha \in \mathbb{Z}_{+}^{n},$ then
\begin{equation*}
\left\vert \partial ^{\alpha }\left( f_{\varepsilon }g_{\varepsilon }\right)
\left( x\right) \right\vert \leq \sum_{\beta =0}^{\alpha }\binom{\alpha }{%
\beta }\left\vert \partial ^{\alpha -\beta }f_{\varepsilon }\left( x\right)
\right\vert \left\vert \partial ^{\beta }g_{\varepsilon }\left( x\right)
\right\vert .
\end{equation*}%
From (\ref{c3}) $\exists N"\in \mathcal{R}$ such that, $\forall \beta \leq
\alpha ,$ $N_{\left\vert \alpha -\beta \right\vert }+N_{\left\vert \beta
\right\vert }^{\prime }\leq N"_{\left\vert \alpha \right\vert },$ and from $%
\left( H1\right) $, we have $M_{p}M_{q}\leq M_{p+q}$, then for $\varepsilon
\leq \min \left\{ \varepsilon _{1},\varepsilon _{2}\right\} $ and $x\in K,$
we have
\begin{eqnarray*}
\frac{\varepsilon ^{N"_{\left\vert \alpha \right\vert }}}{M_{\left\vert
\alpha \right\vert }}\left\vert \partial ^{\alpha }\left( f_{\varepsilon
}g_{\varepsilon }\right) \left( x\right) \right\vert &\leq &\sum_{\beta
=0}^{\alpha }\binom{\alpha }{\beta }\frac{\varepsilon ^{N_{\left\vert \alpha
-\beta \right\vert }}}{M_{\left\vert \alpha -\beta \right\vert }}\left\vert
\partial ^{\alpha -\beta }f_{\varepsilon }\left( x\right) \right\vert \times
\\
&&\times \frac{\varepsilon ^{N_{\left\vert \beta \right\vert }^{\prime }}}{%
M_{\left\vert \beta \right\vert }}\left\vert \partial ^{\beta
}g_{\varepsilon }\left( x\right) \right\vert \\
&\leq &\sum_{\beta =0}^{\alpha }\binom{\alpha }{\beta }C_{1}^{\left\vert
\alpha -\beta \right\vert +1}C_{2}^{\left\vert \beta \right\vert +1} \\
&\leq &C^{\left\vert \alpha \right\vert +1},
\end{eqnarray*}%
where $C=\max \left\{ C_{1}C_{2},C_{1}+C_{2}\right\} $, then $\left(
f_{\varepsilon }g_{\varepsilon }\right) _{\varepsilon }\in \mathcal{X}^{M,%
\mathcal{R}}\left( \Omega \right) $.

Let now $\alpha ,\beta \in \mathbb{Z}_{+}^{n},$ where $\left\vert \beta
\right\vert =1$, then for $\varepsilon \leq \varepsilon _{1}$ and $x\in K,$
we have
\begin{equation*}
\left\vert \partial ^{\alpha }\left( \partial ^{\beta }f_{\varepsilon
}\right) \left( x\right) \right\vert \leq C_{1}^{\left\vert \alpha
\right\vert +2}M_{\left\vert \alpha \right\vert +1}\varepsilon
^{-N_{\left\vert \alpha \right\vert +1}}.
\end{equation*}%
From (\ref{c1}), $\exists N^{\prime }\in \mathcal{R}$, such that $%
N_{\left\vert \alpha \right\vert +1}\leq N_{\left\vert \alpha \right\vert
}^{\prime }$, and from $\left( H2\right) ^{\prime },\exists A>0,H>0$, such
that $M_{\left\vert \alpha \right\vert +1}\leq AH^{\left\vert \alpha
\right\vert }M_{\left\vert \alpha \right\vert }$, we have
\begin{eqnarray*}
\left\vert \partial ^{\alpha }\left( \partial ^{\beta }f_{\varepsilon
}\right) \left( x\right) \right\vert &\leq &AC_{1}^{2}\left( C_{1}H\right)
^{\left\vert \alpha \right\vert }M_{\left\vert \alpha \right\vert
}\varepsilon ^{-N_{\left\vert \alpha \right\vert }^{\prime }} \\
&\leq &C^{\left\vert \alpha \right\vert +1}M_{\left\vert \alpha \right\vert
}\varepsilon ^{-N_{\left\vert \alpha \right\vert }^{\prime }},
\end{eqnarray*}%
which means $\left( \partial ^{\beta }f_{\varepsilon }\right) _{\varepsilon
}\in \mathcal{X}^{M,\mathcal{R}}\left( \Omega \right) .$

2) The facts that $\mathcal{N}^{M,\mathcal{R}}\left( \Omega \right) =%
\mathcal{N}\left( \Omega \right) \cap \mathcal{X}^{M,\mathcal{R}}\left(
\Omega \right) \subset \mathcal{X}^{M,\mathcal{R}}\left( \Omega \right) $
and $\mathcal{N}\left( \Omega \right) =\mathcal{N}^{\mathcal{R}}\left(
\Omega \right) $ is an ideal of $\mathcal{X}^{\mathcal{R}}\left( \Omega
\right) $ give that $\mathcal{N}^{M,\mathcal{R}}\left( \Omega \right) $ is
an ideal of $\mathcal{X}^{M,\mathcal{R}}\left( \Omega \right) .$
\end{proof}

The following definition introduces the algebra of ultraregular generalized
functions.

\begin{definition}
The algebra of ultraregular generalized functions of class $M=\left(
M_{p}\right) _{p\in \mathbb{Z}_{+}},$ denoted $\mathcal{G}^{M,\mathcal{R}%
}\left( \Omega \right) ,$ is the quotient algebra
\begin{equation}
\mathcal{G}^{M,\mathcal{R}}\left( \Omega \right) =\frac{\mathcal{X}^{M,%
\mathcal{R}}\left( \Omega \right) }{\mathcal{N}^{M,\mathcal{R}}\left( \Omega
\right) }.
\end{equation}
\end{definition}

The basic properties of $\mathcal{G}^{M,\mathcal{R}}\left( \Omega \right) $\
are given in the following assertion.

\begin{proposition}
$\mathcal{G}^{M,\mathcal{R}}\left( \Omega \right) $ is a differential
subalgebra of $\mathcal{G}\left( \Omega \right) .$
\end{proposition}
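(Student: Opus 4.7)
The plan is to view $\mathcal{G}^{M,\mathcal{R}}(\Omega)$ as a subobject of $\mathcal{G}(\Omega)$ via the natural map induced by the inclusion $\mathcal{X}^{M,\mathcal{R}}(\Omega)\hookrightarrow \mathcal{X}(\Omega)$. The entire statement breaks into three routine checks once Proposition \ref{pro3} is in hand, so the proof is essentially an assembly rather than a computation.

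First I would verify that the quotient $\mathcal{G}^{M,\mathcal{R}}(\Omega)=\mathcal{X}^{M,\mathcal{R}}(\Omega)/\mathcal{N}^{M,\mathcal{R}}(\Omega)$ makes sense as a differential algebra in its own right. By Proposition \ref{pro3}, $\mathcal{X}^{M,\mathcal{R}}(\Omega)$ is a subalgebra of $\mathcal{X}(\Omega)$ stable under all $\partial^\alpha$, and $\mathcal{N}^{M,\mathcal{R}}(\Omega)$ is a differential ideal in it (the stability of $\mathcal{N}^{M,\mathcal{R}}(\Omega)$ under $\partial^\alpha$ is inherited from $\mathcal{N}(\Omega)$, which is trivially stable, combined with stability of $\mathcal{X}^{M,\mathcal{R}}(\Omega)$). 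Thus the quotient is a differential algebra.

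The heart of the argument is to show that the canonical map
\begin{equation*}
\iota : \mathcal{G}^{M,\mathcal{R}}(\Omega)\longrightarrow \mathcal{G}(\Omega), \qquad [(f_\varepsilon)]_{M,\mathcal{R}}\longmapsto [(f_\varepsilon)]_{\mathcal{G}},
\end{equation*}
is well-defined and injective. Well-definedness reduces to the inclusion $\mathcal{N}^{M,\mathcal{R}}(\Omega)\subset\mathcal{N}(\Omega)$, which is immediate from the very definition $\mathcal{N}^{M,\mathcal{R}}(\Omega)=\mathcal{N}(\Omega)\cap\mathcal{X}^{M,\mathcal{R}}(\Omega)$. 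For injectivity, suppose $(f_\varepsilon),(g_\varepsilon)\in\mathcal{X}^{M,\mathcal{R}}(\Omega)$ with $(f_\varepsilon)-(g_\varepsilon)\in\mathcal{N}(\Omega)$; since $\mathcal{X}^{M,\mathcal{R}}(\Omega)$ is a subalgebra (so closed under subtraction), the difference lies in $\mathcal{X}^{M,\mathcal{R}}(\Omega)\cap\mathcal{N}(\Omega)=\mathcal{N}^{M,\mathcal{R}}(\Omega)$, so the two classes coincide in $\mathcal{G}^{M,\mathcal{R}}(\Omega)$.

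Finally, $\iota$ respects sums, products, and derivatives at the representative level, so it is a homomorphism of differential algebras, and its injectivity identifies $\mathcal{G}^{M,\mathcal{R}}(\Omega)$ with a differential subalgebra of $\mathcal{G}(\Omega)$. There is no real obstacle here: the only delicate point is the identification $\mathcal{N}^{M,\mathcal{R}}(\Omega)=\mathcal{N}(\Omega)\cap\mathcal{X}^{M,\mathcal{R}}(\Omega)$, and it is built into the definition precisely so that the embedding is injective without needing any further argument about dominating the ultradifferentiable bounds by the negligible bounds.
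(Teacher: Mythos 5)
Your proof is correct and follows essentially the same route as the paper, which simply invokes Proposition \ref{pro3} for the algebraic and differential structure; you additionally make explicit the injectivity of the canonical map via the identity $\mathcal{N}^{M,\mathcal{R}}\left( \Omega \right) =\mathcal{N}\left( \Omega \right) \cap \mathcal{X}^{M,\mathcal{R}}\left( \Omega \right) $, a point the paper leaves implicit in the definition.
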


\begin{proof}
The algebraic properties hold from proposition \ref{pro3}.
\end{proof}

\begin{example}
If we take the set $\mathcal{R}=\mathcal{B}$ we obtain as a particular case
the algebra $\mathcal{G}^{M,\mathcal{B}}\left( \Omega \right) $ of \cite%
{Marti} denoted there by $\mathcal{G}^{L}\left( \Omega \right) .\ $
\end{example}

\begin{example}
If we take $\left( M_{p}\right) _{p\in \mathbb{Z}_{+}}=\left( p!^{\sigma
}\right) _{p\in \mathbb{Z}_{+}}$ we obtain a new subalgebra $\mathcal{G}%
^{\sigma ,\mathcal{R}}\left( \Omega \right) $ of $\mathcal{G}\left( \Omega
\right) $\ called the algebra of Gevrey regular generalized functions of
order $\sigma .$
\end{example}

\begin{example}
If we take both the set $\mathcal{R}=\mathcal{B}$ and $\left( M_{p}\right)
_{p\in \mathbb{Z}_{+}}=\left( p!^{\sigma }\right) _{p\in \mathbb{Z}_{+}}$ we
obtain a new algebra, denoted $\mathcal{G}^{\sigma ,\infty }\left( \Omega
\right) ,$ that we will call the Gevrey-Oberguggenberger algebra of order $%
\sigma .$
\end{example}

\begin{remark}
In \cite{benbou2}\ is introduced an algebra of generalized Gevrey
ultradistributions containing the classical Gevrey space $\mathcal{E}%
^{\sigma }\left( \Omega \right) $\ as a subalgebra and the space of Gevrey
ultradistributions $\mathcal{D}_{3\sigma -1}^{\prime }\left( \Omega \right) $%
\ as a subspace.
\end{remark}

It is not evident how to obtain, without more conditions, that $\mathcal{X}%
^{M,\mathcal{R}}\left( \Omega \right) $ is stable by action of
ultradifferential operators of class $M,$\ however we have the following
result.

\begin{proposition}
Suppose that the regular set $\mathcal{R}$\ satisfies as well the following
condition : For all $\left( N_{k}\right) _{k\in \mathbb{Z}_{+}}\in \mathcal{R%
}$ $,$ there exist an $\left( N_{k}^{\ast }\right) _{k\in \mathbb{Z}_{+}}\in
\mathcal{R},$ and positive numbers$\ h>0,L>0,\forall m\in \mathbb{Z}%
_{+},\forall \varepsilon \in \left] 0,1\right] ,$
\begin{equation}
\sum\limits_{k\in \mathbb{Z}_{+}}h^{k}\varepsilon ^{-N_{k+m}}\leq
L\varepsilon ^{-N_{m}^{\ast }}.  \label{(R4)}
\end{equation}%
\ Then the algebra $\mathcal{X}^{M,\mathcal{R}}\left( \Omega \right) $ is
stable by action of ultradifferential operators of class $M.$
\end{proposition}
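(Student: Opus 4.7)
The plan is to verify the estimate (\ref{2}) for $P(D) f_{\varepsilon}$, where $P(D)=\sum_{\gamma \in \mathbb{Z}_{+}^{n}} a_{\gamma} D^{\gamma}$ is an arbitrary ultradifferential operator of class $M$. Fix $(f_{\varepsilon})_{\varepsilon} \in \mathcal{X}^{M,\mathcal{R}}(\Omega)$ and a compact $K \subset \Omega$; let $N \in \mathcal{R}$, $C > 0$, $\varepsilon_{0} \in \left]0,1\right]$ realize (\ref{2}) on $K$. For each such $\varepsilon$ the estimate shows that $f_{\varepsilon}$ belongs locally to $\mathcal{E}^{M}(\Omega)$, hence by the stability of $\mathcal{E}^{M}$ under ultradifferential operators of class $M$ (recalled in the previous section) the series defining $P(D) f_{\varepsilon}$ converges to an element of $C^{\infty}(\Omega)$ and may be differentiated termwise.

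Next I would estimate, for $\alpha \in \mathbb{Z}_{+}^{n}$ and $x \in K$,
$$
\left| \partial^{\alpha} P(D) f_{\varepsilon}(x) \right| \leq \sum_{\gamma} |a_{\gamma}| \, \bigl| \partial^{\alpha+\gamma} f_{\varepsilon}(x) \bigr|.
$$
Combining (\ref{2}) applied with $\alpha+\gamma$, the consequence $M_{|\alpha|+|\gamma|} \leq A H^{|\alpha|+|\gamma|} M_{|\alpha|} M_{|\gamma|}$ of $(H2)$, and the coefficient bound (\ref{12}) with a parameter $h_{P}>0$ still to be chosen, the factors $M_{|\gamma|}$ cancel and one gets
$$
|a_{\gamma}| \, \bigl| \partial^{\alpha+\gamma} f_{\varepsilon}(x) \bigr| \leq cAC \, (CH)^{|\alpha|} M_{|\alpha|} \, (CH h_{P})^{|\gamma|} \, \varepsilon^{-N_{|\alpha|+|\gamma|}}.
$$
Grouping multi-indices by length $k=|\gamma|$ and using that $\#\{\gamma : |\gamma|=k\}$ grows only polynomially in $k$, one finds that for any prescribed $h>0$ there is a constant $C_{n}$ with $\#\{|\gamma|=k\}(CHh_{P})^{k} \leq C_{n} h^{k}$, provided $h_{P}$ is small enough. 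I choose $h$ as the value furnished by the hypothesis (\ref{(R4)}) applied to the sequence $N$, obtaining $N^{\ast} \in \mathcal{R}$ and $L>0$ with $\sum_{k} h^{k} \varepsilon^{-N_{k+|\alpha|}} \leq L \varepsilon^{-N^{\ast}_{|\alpha|}}$ uniformly in $\varepsilon \in \left]0,1\right]$. Collecting everything and absorbing constants into a single $C' > 0$ yields
$$
\sup_{x \in K} \bigl| \partial^{\alpha} P(D) f_{\varepsilon}(x) \bigr| \leq (C')^{|\alpha|+1} M_{|\alpha|} \varepsilon^{-N^{\ast}_{|\alpha|}}
$$
for $\varepsilon \leq \varepsilon_{0}$, which is exactly (\ref{2}) with the sequence $N^{\ast} \in \mathcal{R}$.

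The main obstacle is coordinating three small parameters: the constants $C$ and $H$ coming from the function and from $(H2)$ are fixed by the data; the value of $h$ is dictated by the hypothesis (\ref{(R4)}) on $\mathcal{R}$; but the parameter $h_{P}$ in (\ref{12}) is at our disposal. It must be chosen small enough that $CH h_{P}$, once inflated by the polynomial multiplicity of multi-indices of fixed length, remains bounded by the prescribed $h$. Only after this ordering is respected does (\ref{(R4)}) convert the $\gamma$-sum into an $\varepsilon$-bound governed by a sequence in the class $\mathcal{R}$, thereby closing the estimate.
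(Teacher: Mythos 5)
Your proposal is correct and follows essentially the same route as the paper's proof: bound $\partial ^{\alpha }\left( P\left( D\right) f_{\varepsilon }\right) $ by the sum over coefficients, use $\left( H2\right) $ to cancel $M_{\left\vert \gamma \right\vert }$, exploit the freedom of taking the parameter in the coefficient estimate (\ref{12}) as small as needed, and then apply (\ref{(R4)}) to control the resulting sum by $L\varepsilon ^{-N_{\left\vert \alpha \right\vert }^{\ast }}$. You are in fact somewhat more careful than the paper, which glosses over the multiplicity of multi-indices of a given length when passing to the single-index sum in (\ref{(R4)}) and over the convergence of the series defining $P\left( D\right) f_{\varepsilon }$.
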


\begin{proof}
Let $\left( f_{\varepsilon }\right) _{\varepsilon }\in \mathcal{X}^{M,%
\mathcal{R}}\left( \Omega \right) $ and $P\left( D\right)
=\sum\limits_{\beta \in \mathbb{Z}_{+}^{n}}a_{\beta }D^{\beta }$ be an
ultradifferential operator of class $M,$ then for any compact set $K$ of $%
\Omega ,$ $\exists \left( N_{m}\right) _{m\in \mathbb{Z}_{+}}\in \mathcal{R}%
,\exists C>0,\exists \varepsilon _{0}\in \left] 0,1\right] ,$ such that $%
\forall \alpha \in \mathbb{Z}_{+}^{n},\forall x\in K,\forall \varepsilon
\leq \varepsilon _{1},$%
\begin{equation*}
\left\vert \partial ^{\alpha }f_{\varepsilon }\left( x\right) \right\vert
\leq C^{\left\vert \alpha \right\vert +1}M_{\left\vert \alpha \right\vert
}\varepsilon ^{-N_{\left\vert \alpha \right\vert }}.
\end{equation*}%
For every $h>0$ there exists a $c>0$ such that $\forall \beta \in \mathbb{Z}%
_{+}^{n},$
\begin{equation*}
\left\vert a_{\beta }\right\vert \leq c\frac{h^{\left\vert \beta \right\vert
}}{M_{\left\vert \beta \right\vert }}.
\end{equation*}%
Let $\alpha \in \mathbb{Z}_{+}^{n}$, then
\begin{eqnarray*}
\frac{h^{\left\vert \alpha \right\vert }}{M_{\left\vert \alpha \right\vert }}%
\left\vert \partial ^{\alpha }\left( P\left( D\right) f_{\varepsilon
}\right) \left( x\right) \right\vert &\leq &\sum\limits_{\beta \in \mathbb{Z}%
_{+}^{n}}\left\vert a_{\beta }\right\vert \frac{h^{\left\vert \alpha
\right\vert }}{M_{\left\vert \alpha \right\vert }}\left\vert \partial
^{\alpha +\beta }f_{\varepsilon }\left( x\right) \right\vert \\
&\leq &c\sum\limits_{\beta \in \mathbb{Z}_{+}^{n}}\frac{h^{\left\vert \beta
\right\vert }}{M_{\left\vert \beta \right\vert }}\frac{h^{\left\vert \alpha
\right\vert }}{M_{\left\vert \alpha \right\vert }}\left\vert \partial
^{\alpha +\beta }f_{\varepsilon }\left( x\right) \right\vert .
\end{eqnarray*}%
From $\left( H2\right) $ and $\left( f_{\varepsilon }\right) _{\varepsilon
}\in \mathcal{X}^{M,\mathcal{R}}\left( \Omega \right) ,$ then$\ \exists
C>0,\exists A>0,\exists H>0,$%
\begin{equation*}
\frac{h^{\left\vert \alpha \right\vert }}{M_{\left\vert \alpha \right\vert }}%
\left\vert \partial ^{\alpha }\left( P\left( D\right) f_{\varepsilon
}\right) \left( x\right) \right\vert \leq A^{\left\vert \alpha \right\vert
}\sum\limits_{\beta \in \mathbb{Z}_{+}^{n}}h^{\left\vert \beta \right\vert
}\varepsilon ^{-N_{\left\vert \alpha +\beta \right\vert }},
\end{equation*}%
consequently by condition (\ref{(R4)}), there exist $\left( N_{k}^{\ast
}\right) _{k\in \mathbb{Z}_{+}}\in \mathcal{R},h>0,L>0,\forall \varepsilon
\in \left] 0,1\right] ,$%
\begin{equation*}
\frac{h^{\left\vert \alpha \right\vert }}{M_{\left\vert \alpha \right\vert }}%
\left\vert \partial ^{\alpha }\left( P\left( D\right) f_{\varepsilon
}\right) \left( x\right) \right\vert \leq A^{\left\vert \alpha \right\vert
}L\varepsilon ^{-N_{\left\vert \alpha \right\vert }^{\ast }}
\end{equation*}%
\ which shows that $\left( P\left( D\right) f_{\varepsilon }\right)
_{\varepsilon }\in \mathcal{X}^{M,\mathcal{R}}\left( \Omega \right) .$
\end{proof}

\begin{example}
The sets $\left\{ 0\right\} $ and $\mathcal{B}$ satisfy the condition (\ref%
{(R4)}).
\end{example}

The space $\mathcal{E}^{M}\left( \Omega \right) $ is embedded into $\mathcal{%
G}^{M,\mathcal{R}}\left( \Omega \right) $ for all $\mathcal{R}$ by the
canonical map
\begin{equation*}
\begin{array}{ccc}
\sigma :\mathcal{E}^{M}\left( \Omega \right) & \rightarrow & \mathcal{G}^{M,%
\mathcal{R}}\left( \Omega \right) \\
u & \rightarrow & \left[ u_{\varepsilon }\right]%
\end{array}%
,
\end{equation*}%
where $u_{\varepsilon }=u$ for all $\varepsilon \in \left] 0,1\right] ,$
which is an injective homomorphism of algebras.

\begin{proposition}
The following diagram%
\begin{equation*}
\begin{array}{ccccc}
\mathcal{E}^{M}\left( \Omega \right) & \rightarrow & C^{\infty }\left(
\Omega \right) & \rightarrow & \mathcal{D}^{\prime }\left( \Omega \right) \\
\downarrow &  & \downarrow &  & \downarrow \\
\mathcal{G}^{M,\mathcal{B}}\left( \Omega \right) & \rightarrow & \mathcal{G}%
^{\mathcal{B}}\left( \Omega \right) & \rightarrow & \mathcal{G}\left( \Omega
\right)%
\end{array}%
\end{equation*}%
\textit{is commutative.}
\end{proposition}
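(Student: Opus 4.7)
The plan is to split the diagram into its two elementary squares and verify commutativity of each separately, using the explicit description of the vertical embeddings. For the whole argument, I will rely on two facts: that the vertical embeddings of $\mathcal{E}^M$ and of $C^\infty$ are the constant-net maps $u \mapsto [(u)_\varepsilon]$, and that the rightmost vertical embedding $\iota \colon \mathcal{D}'(\Omega) \to \mathcal{G}(\Omega)$ is the standard sheaf embedding, realized via convolution with a fixed Colombeau mollifier $\rho$.

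For the left square, both vertical arrows are constant-net embeddings, and the upper horizontal arrow is the set-theoretic inclusion $\mathcal{E}^M(\Omega) \hookrightarrow C^\infty(\Omega)$. Going right then down sends $u \in \mathcal{E}^M(\Omega)$ to the class in $\mathcal{G}^{\mathcal{B}}(\Omega)$ of the constant net $(u)_\varepsilon$, while going down then right first produces the class in $\mathcal{G}^{M,\mathcal{B}}(\Omega)$ of the same constant net and then includes it into $\mathcal{G}^{\mathcal{B}}(\Omega)$. Since $\mathcal{G}^{M,\mathcal{B}}(\Omega) \hookrightarrow \mathcal{G}^{\mathcal{B}}(\Omega)$ is induced by inclusion of moderate subalgebras and therefore preserves representatives, the two classes coincide. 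The only verification needed is that $(u)_\varepsilon \in \mathcal{X}^{M,\mathcal{B}}(\Omega)$ when $u \in \mathcal{E}^M(\Omega)$, which is immediate from (\ref{1}) by choosing $N_{|\alpha|} \equiv 0 \in \mathcal{B}$ in (\ref{2}).

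For the right square, write $\sigma_\infty \colon C^\infty(\Omega) \to \mathcal{G}^{\mathcal{B}}(\Omega)$ for the constant-net embedding. The two routes take $f \in C^\infty(\Omega)$ to $\iota(f)$ and to the image in $\mathcal{G}(\Omega)$ of $\sigma_\infty(f) = [(f)_\varepsilon]$ respectively, so commutativity reduces to the identity $\iota(f) = [(f)_\varepsilon]$ in $\mathcal{G}(\Omega)$; equivalently, $(f \ast \rho_\varepsilon - f)_\varepsilon \in \mathcal{N}(\Omega)$. With $\rho$ chosen in the standard Colombeau class, i.e.\ $\int \rho = 1$ and $\int x^\alpha \rho(x)\,dx = 0$ for all $|\alpha| \geq 1$, a Taylor expansion of $f$ at $x$ to arbitrary order $q$ combined with the scaling $\rho_\varepsilon(y) = \varepsilon^{-n}\rho(y/\varepsilon)$ yields a local bound of the form $\sup_{x \in K}|\partial^\alpha(f \ast \rho_\varepsilon - f)(x)| \leq C_{\alpha,q}\, \varepsilon^q$ for every $q \in \mathbb{Z}_+$, which is precisely the $\mathcal{N}(\Omega)$ condition.

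The structural obstacle therefore lies entirely in the right square, since the left one commutes by the very definition of the representatives. That obstacle is the classical coherence between the convolution embedding $\iota$ and the constant-net embedding on $C^\infty$; it is handled by the standard Taylor-expansion argument for mollifiers with vanishing higher moments, so no new estimate beyond those already available in Colombeau theory is required.
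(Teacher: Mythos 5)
Your argument is correct and follows essentially the same route as the paper: the left square commutes trivially because all maps there are constant-net embeddings or inclusions of moderate/null ideals, and the whole issue is the coherence of the convolution embedding $\mathcal{D}'(\Omega)\rightarrow\mathcal{G}(\Omega)$ with the constant-net embedding on $C^{\infty}(\Omega)$, which the paper likewise reduces to the classical commutative triangle and cites from the standard Colombeau literature. You merely spell out the Taylor-expansion/vanishing-moments estimate that the paper delegates to \cite{GKOS}, so there is nothing to correct.
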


\begin{proof}
The embeddings in the diagram are canonical except the embedding $\mathcal{D}%
^{\prime }\left( \Omega \right) \rightarrow \mathcal{G}\left( \Omega \right)
,$\ which is now well known in framework of Colombeau generalized functions,
see \cite{GKOS} for details. The commutativity of the diagram is then
obtained easily from the commutativity of the classical diagram
\begin{equation*}
\begin{array}{ccc}
C^{\infty }\left( \Omega \right) & \rightarrow & \mathcal{D}^{\prime }\left(
\Omega \right) \\
& \searrow & \downarrow \\
&  & \mathcal{G}\left( \Omega \right)%
\end{array}%
\end{equation*}
\end{proof}

A fundamental result of regularity in $\mathcal{G}\left( \Omega \right) $ is
the following.

\begin{theorem}
We have $\mathcal{G}^{M,\mathcal{B}}\left( \Omega \right) \cap \mathcal{D}%
^{\prime }\left( \Omega \right) =\mathcal{E}^{M}\left( \Omega \right) .$
\end{theorem}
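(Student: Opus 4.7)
The easy inclusion $\mathcal E^M(\Omega)\subset\mathcal G^{M,\mathcal B}(\Omega)\cap\mathcal D'(\Omega)$ is immediate: for $u\in\mathcal E^M(\Omega)$ the constant net $u_\varepsilon\equiv u$ lies in $\mathcal X^{M,\mathcal B}(\Omega)$ (take $N\equiv 0\in\mathcal B$), and the commutative diagram of the preceding proposition identifies its classes in $\mathcal G^{M,\mathcal B}(\Omega)$ and in $\mathcal D'(\Omega)$.

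For the nontrivial direction, take $u\in\mathcal D'(\Omega)\cap\mathcal G^{M,\mathcal B}(\Omega)$ with representative $(f_\varepsilon)\in\mathcal X^{M,\mathcal B}(\Omega)$, so that $(f_\varepsilon-u*\rho_\varepsilon)_\varepsilon\in\mathcal N(\Omega)$ for any Colombeau mollifier $\rho$. I would invoke Komatsu's Fourier characterization of $\mathcal E^M$: it suffices to produce, for each $x_0\in\Omega$, a cutoff $\phi\in\mathcal D^M(\Omega)$ with $\phi\equiv 1$ near $x_0$ (which exists by $(H3)'$) satisfying $|\widehat{\phi u}(\xi)|\le C\exp(-\widetilde M(|\xi|/h))$ for some $C,h>0$.

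Split $\widehat{\phi u}(\xi)=\widehat{\phi f_\varepsilon}(\xi)+\widehat{\phi(u-f_\varepsilon)}(\xi)$. For the first term, Leibniz together with $M_pM_q\le M_{p+q}$ (from $(H1)$) yields a uniform Denjoy--Carleman estimate $\sup_K|\partial^\alpha(\phi f_\varepsilon)|\le C_1^{|\alpha|+1}M_{|\alpha|}\varepsilon^{-N}$. Combining this with $|\xi^\alpha \widehat{\phi f_\varepsilon}(\xi)|\le\|\partial^\alpha(\phi f_\varepsilon)\|_{L^1}$ and the dictionary $M_k=\sup_{t>0}t^k/\exp\widetilde M(t)$ of Proposition \ref{pro1} (after optimizing over $\alpha$) gives
\[
|\widehat{\phi f_\varepsilon}(\xi)|\le C_1'\,\varepsilon^{-N}\exp(-\widetilde M(|\xi|/h_1)).
\]
For the second term, negligibility of $f_\varepsilon-u$ at order $|\alpha|=0$ gives $\|\phi(u-f_\varepsilon)\|_{L^\infty}\le C_2\,\varepsilon^m$ for every $m\in\mathbb Z_+$, hence $|\widehat{\phi(u-f_\varepsilon)}(\xi)|\le C_2'\,\varepsilon^m$ uniformly in $\xi$.

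Now balance the two bounds by choosing $\varepsilon=\exp(-\widetilde M(|\xi|/h_1)/(N+m))$ (which lies in $(0,\varepsilon_0]$ for $|\xi|$ sufficiently large) and setting $m=N$; both terms become of order $\exp(-\widetilde M(|\xi|/h_1)/2)$. Proposition \ref{pro2}, applied in the form $2\widetilde M(t/H)\le\widetilde M(t)+\ln(AM_0)$ (obtained from $(H2)$ by substituting $t\mapsto t/H$), upgrades this to $|\widehat{\phi u}(\xi)|\le C_4\exp(-\widetilde M(|\xi|/h))$ with $h=h_1H$, which is precisely the Komatsu bound. Since $x_0$ is arbitrary and $\mathcal E^M$ is a sheaf, $u\in\mathcal E^M(\Omega)$. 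The main obstacle is the factor $\varepsilon^{-N}$ polluting the Fourier estimate of $\widehat{\phi f_\varepsilon}$: one cannot simply let $\varepsilon\to 0$. The crucial trick is to trade moderate blow-up against ultra-rapid decay by a $\xi$-dependent choice of $\varepsilon$, at the cost of a factor $1/2$ in the exponent, which is then absorbed via the stability condition $(H2)$ of the weight sequence.
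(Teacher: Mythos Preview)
Your argument has a gap at the point where you estimate the second term $\widehat{\phi(u-f_\varepsilon)}(\xi)$. You correctly record that the embedding of $\mathcal D'(\Omega)$ into $\mathcal G(\Omega)$ is via mollification, so that $(f_\varepsilon-u\ast\rho_\varepsilon)_\varepsilon\in\mathcal N(\Omega)$; but two lines later you invoke ``negligibility of $f_\varepsilon-u$ at order $|\alpha|=0$'' to obtain $\|\phi(u-f_\varepsilon)\|_{L^\infty}\le C_2\,\varepsilon^m$. For a mere distribution $u$ the object $u-f_\varepsilon$ is not a net of smooth functions, and nothing in the hypotheses yields an $L^\infty$ bound on $\phi(u-f_\varepsilon)$. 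If instead you replace $u$ by $u\ast\rho_\varepsilon$ in the splitting, you are left with the remainder $\widehat{\phi(u-u\ast\rho_\varepsilon)}(\xi)$, for which no useful uniform estimate is available without smoothness information on $u$.

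The paper closes precisely this hole by a reduction you omit: since $\mathcal G^{M,\mathcal B}(\Omega)\subset\mathcal G^{\mathcal B}(\Omega)=\mathcal G^\infty(\Omega)$, Oberguggenberger's theorem $\mathcal G^\infty(\Omega)\cap\mathcal D'(\Omega)=C^\infty(\Omega)$ forces $u\in C^\infty(\Omega)$ a priori. Once $u$ is smooth, the constant embedding and the mollified embedding agree in $\mathcal G(\Omega)$, so $(f_\varepsilon-u)_\varepsilon\in\mathcal N(\Omega)$ is legitimate and your zero-order $L^\infty$ estimate becomes valid. With this single preliminary step your Fourier balancing argument (coupling $\varepsilon$ to $|\xi|$ and absorbing the factor $\tfrac12$ via $(H2)$) does go through. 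The paper, having already secured $u\in C^\infty$, argues more directly on the derivatives of $u$ by freezing $\varepsilon$, so your route differs in execution but not in the essential missing ingredient: the reduction to smooth $u$ via the known $\mathcal G^\infty$ regularity result.
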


\begin{proof}
Let $u=cl\left( u_{\varepsilon }\right) _{\varepsilon }\in \mathcal{G}^{M,%
\mathcal{B}}\left( \Omega \right) \cap C^{\infty }\left( \Omega \right) ,$
i.e. $\left( u_{\varepsilon }\right) _{\varepsilon }\in \mathcal{X}^{M,%
\mathcal{B}}\left( \Omega \right) $, then we have for every compact set $%
K\subset \Omega ,\exists N\in \mathbb{Z}_{+},\exists c>0,\exists \eta \in %
\left] 0,1\right] ,$%
\begin{equation*}
\forall \alpha \in \mathbb{Z}_{+}^{n},\forall \varepsilon \in \left] 0,\eta %
\right] :\sup\limits_{x\in K}\left\vert \partial ^{\alpha }u\left( x\right)
\right\vert \leq c^{\left\vert \alpha \right\vert +1}M_{\left\vert \alpha
\right\vert }\varepsilon ^{-N}.
\end{equation*}%
When choosing $\varepsilon =\eta $, we obtain
\begin{equation*}
\forall \alpha \in \mathbb{Z}_{+}^{n},\sup\limits_{x\in K}\left\vert
\partial ^{\alpha }u\left( x\right) \right\vert \leq c^{\left\vert \alpha
\right\vert +1}M_{\left\vert \alpha \right\vert }\eta ^{-N}\leq
c_{1}^{\left\vert \alpha \right\vert +1}M_{\left\vert \alpha \right\vert },
\end{equation*}%
where $c_{1}$ depends only on $K$. Then $u$ is in $\mathcal{E}^{M}\left(
\Omega \right) $. This shows that $\mathcal{G}^{M,\mathcal{B}}\left( \Omega
\right) \cap C^{\infty }\left( \Omega \right) \subset \mathcal{E}^{M}\left(
\Omega \right) $. As the reverse inclusion is obvious, then we have proved $%
\mathcal{G}^{M,\mathcal{B}}\left( \Omega \right) \cap C^{\infty }\left(
\Omega \right) =\mathcal{E}^{M}\left( \Omega \right) $. Consequently
\begin{eqnarray*}
\mathcal{G}^{M,\mathcal{B}}\left( \Omega \right) \cap \mathcal{D}^{\prime
}\left( \Omega \right) &=&\left( \mathcal{G}^{M,\mathcal{B}}\left( \Omega
\right) \cap \mathcal{G}^{\mathcal{B}}\left( \Omega \right) \right) \cap
\mathcal{D}^{\prime }\left( \Omega \right) \\
&=&\mathcal{G}^{M,\mathcal{B}}\left( \Omega \right) \cap \left( \mathcal{G}^{%
\mathcal{B}}\left( \Omega \right) \cap \mathcal{D}^{\prime }\left( \Omega
\right) \right) \\
&=&\mathcal{G}^{M,\mathcal{B}}\left( \Omega \right) \cap C^{\infty }\left(
\Omega \right) \\
&=&\mathcal{E}^{M}\left( \Omega \right)
\end{eqnarray*}
\end{proof}

\begin{proposition}
The algebra $\mathcal{G}^{M,\mathcal{R}}\left( \Omega \right) $ is a sheaf
of subalgebras of $\mathcal{G}\left( \Omega \right) .$
\end{proposition}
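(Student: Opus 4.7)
The plan is to verify the two sheaf axioms (locality/separation and gluing) for the presheaf $\Omega \mapsto \mathcal{G}^{M,\mathcal{R}}(\Omega)$, piggy-backing on the already established fact that $\mathcal{G}$ itself is a sheaf (cf.\ \cite{GKOS}), and producing the glued section via an ultradifferentiable partition of unity. First, the restriction maps are well-defined: if $(f_\varepsilon)_\varepsilon \in \mathcal{X}^{M,\mathcal{R}}(\Omega)$ and $\Omega' \subset \Omega$ is open, every compact $K' \subset \Omega'$ is already compact in $\Omega$, so estimate (\ref{2}) transfers verbatim, and similarly for $\mathcal{N}^{M,\mathcal{R}}$. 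Separation then comes for free: if $u \in \mathcal{G}^{M,\mathcal{R}}(\Omega)$ vanishes on each $\Omega_i$ of a cover, it vanishes a fortiori in $\mathcal{G}(\Omega_i)$, hence in $\mathcal{G}(\Omega)$, hence in its subalgebra.

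For the gluing, given compatible sections $u_i \in \mathcal{G}^{M,\mathcal{R}}(\Omega_i)$ on a cover $(\Omega_i)_{i\in I}$ of $\Omega$, I would fix a locally finite refinement $(V_j)_{j\in J}$ with each $\overline{V_j}$ compact in some $\Omega_{i(j)}$, invoke $(H3)'$ (which gives $\mathcal{D}^M(\Omega) \neq \{0\}$ by the proposition in the previous section) to select a partition of unity $(\chi_j)_j \subset \mathcal{D}^M(\Omega)$ subordinate to $(V_j)_j$, pick representatives $(u_{i(j),\varepsilon})_\varepsilon \in \mathcal{X}^{M,\mathcal{R}}(\Omega_{i(j)})$ for each $u_{i(j)}$, and define
\begin{equation*}
u_\varepsilon(x) \;:=\; \sum_{j \in J} \chi_j(x)\, u_{i(j),\varepsilon}(x),
\end{equation*}
a locally finite sum. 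It remains to check that $(u_\varepsilon)_\varepsilon \in \mathcal{X}^{M,\mathcal{R}}(\Omega)$ and that $u_{|\Omega_i} = u_i$ in $\mathcal{G}^{M,\mathcal{R}}(\Omega_i)$ for every $i$.

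The main obstacle is the closure estimate for $\partial^\alpha(\chi_j u_{i(j),\varepsilon})$ on a fixed compact $K$, which meets only finitely many of the $V_j$. Expanding via Leibniz and bounding $|\partial^\gamma \chi_j| \leq c^{|\gamma|+1} M_{|\gamma|}$ while using (\ref{2}) for $u_{i(j),\varepsilon}$, condition $(H1)$ in the form $M_p M_q \leq M_{p+q}$ collapses the Denjoy--Carleman factors to $M_{|\alpha|}$, and condition (R3), applied as $N_{|\alpha-\gamma|} + 0 \leq N_{|\alpha-\gamma|} + N_{|\gamma|} \leq N^*_{|\alpha|}$, absorbs the $\mathcal{R}$-exponents into a single $N^* \in \mathcal{R}$; the finitely many distinct $N^{(i(j))}$ arising from the finitely many $j$ meeting $K$ are then amalgamated by iterated (R2). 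Summing the binomial coefficients yields the required bound $C^{|\alpha|+1} M_{|\alpha|} \varepsilon^{-N^*_{|\alpha|}}$. The compatibility verification reduces to the identical Leibniz bookkeeping applied to $u_\varepsilon - u_{i,\varepsilon} = \sum_j \chi_j\,(u_{i(j),\varepsilon} - u_{i,\varepsilon})$ on $\Omega_i$: each summand is a null element of $\mathcal{X}^{M,\mathcal{R}}(\Omega_{i(j)} \cap \Omega_i)$ by the hypothesis $u_{i(j)} = u_i$ on the overlap, and the sum lies in $\mathcal{N}(\Omega_i) \cap \mathcal{X}^{M,\mathcal{R}}(\Omega_i) = \mathcal{N}^{M,\mathcal{R}}(\Omega_i)$.
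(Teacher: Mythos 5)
Your proof is correct and fills in, with the standard partition-of-unity gluing argument, exactly what the paper's one-line proof delegates to the analogous sheaf arguments for $\mathcal{G}^{\mathcal{R}}\left( \Omega \right)$ and $\mathcal{E}^{M}\left( \Omega \right)$. In particular you make explicit the one point the paper leaves implicit, namely that the partition of unity must be taken in $\mathcal{D}^{M}\left( \Omega \right)$ (available by $(H3)^{\prime}$) so that the Leibniz estimates, via $(H1)$, (R2) and (R3), preserve both the Denjoy--Carleman bounds and the $\mathcal{R}$-exponents.
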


\begin{proof}
The sheaf property of $\mathcal{G}^{M,\mathcal{R}}\left( \Omega \right) $\
is obtained in the same way as the sheaf properties of $\mathcal{G}^{%
\mathcal{R}}\left( \Omega \right) $\ and $\mathcal{E}^{M}\left( \Omega
\right) $.
\end{proof}

We can now give a new tool of $\mathcal{G}^{M,\mathcal{R}}$-local regularity
analysis.

\begin{definition}
Define the $(M,\mathcal{R})$-singular support of a generalized function $%
u\in \mathcal{G}\left( \Omega \right) ,$ denoted by $\mathrm{singsupp}_{M,%
\mathcal{R}}\left( u\right) ,$ as the complement of the largest open set $%
\Omega ^{\prime }$ such that $u\in \mathcal{G}^{M,\mathcal{R}}\left( \Omega
^{\prime }\right) .$
\end{definition}

The basic property of $\mathrm{singsupp}_{M,\mathcal{R}}$\ is summarized in
the following proposition, which is easy to prove by the facts above.

\begin{proposition}
\label{pseudlocProp}Let $P\left( x,D\right) =\sum\limits_{\left\vert \alpha
\right\vert \leq m}a_{\alpha }\left( x\right) D^{\alpha }$ be a generalized
linear partial differential operator with $\mathcal{G}^{M,\mathcal{R}}\left(
\Omega \right) $ coefficients, then
\begin{equation}
\mathrm{singsupp}_{M,\mathcal{R}}\left( P\left( x,D\right) u\right) \subset
\mathrm{singsupp}_{M,\mathcal{R}}\left( u\right) ,\forall u\in \mathcal{G}%
\left( \Omega \right)  \label{pseudloc}
\end{equation}
\end{proposition}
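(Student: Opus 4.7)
The plan is to argue purely at the level of definitions, relying on the algebraic/sheaf structure already established. Let $F := \mathrm{singsupp}_{M,\mathcal{R}}(u)$ and set $\Omega' := \Omega \setminus F$, so that, by definition of the $(M,\mathcal{R})$-singular support, $u_{/\Omega'} \in \mathcal{G}^{M,\mathcal{R}}(\Omega')$ and $\Omega'$ is the largest open subset of $\Omega$ with this property. The whole task reduces to proving that $(P(x,D)u)_{/\Omega'} \in \mathcal{G}^{M,\mathcal{R}}(\Omega')$; then $\Omega'$ is contained in the largest open set on which $P(x,D)u$ is ultraregular of class $M$, and the inclusion $\mathrm{singsupp}_{M,\mathcal{R}}(P(x,D)u) \subset F$ follows immediately from complementation.

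First, I would invoke the sheaf property of $\mathcal{G}^{M,\mathcal{R}}$ (established in the preceding proposition) to restrict each coefficient $a_{\alpha} \in \mathcal{G}^{M,\mathcal{R}}(\Omega)$ to an element $a_{\alpha/\Omega'} \in \mathcal{G}^{M,\mathcal{R}}(\Omega')$. Next, Proposition \ref{pro3}(1) guarantees that $\mathcal{X}^{M,\mathcal{R}}(\Omega')$ is stable under the action of differential operators of finite order, so $D^{\alpha}u_{/\Omega'}$ represents an element of $\mathcal{G}^{M,\mathcal{R}}(\Omega')$ for every multi-index $\alpha$ with $|\alpha|\leq m$.

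Then I would use the algebra part of Proposition \ref{pro3}(1): each product $a_{\alpha}\cdot D^{\alpha}u$, being the product of two elements of the algebra $\mathcal{G}^{M,\mathcal{R}}(\Omega')$, lies again in $\mathcal{G}^{M,\mathcal{R}}(\Omega')$. Summing the finitely many terms with $|\alpha|\leq m$ keeps us in this subalgebra, giving $(P(x,D)u)_{/\Omega'} \in \mathcal{G}^{M,\mathcal{R}}(\Omega')$, which is what we needed.

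There is essentially no genuine obstacle here: this is a pseudo-locality statement that follows formally from the already-proved facts that (i) $\mathcal{X}^{M,\mathcal{R}}$ is a differential subalgebra (whose proof used (R2), (R3), (H1), and (H2)$^{\prime}$), and (ii) $\mathcal{G}^{M,\mathcal{R}}$ is a sheaf. The one point worth noting is that the hypothesis $|\alpha|\leq m$ makes the sum finite, so I only need stability under finite-order differentiation, not the stronger stability under ultradifferential operators of class $M$ (which would have required the additional assumption (R4)).
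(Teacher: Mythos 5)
Your argument is correct and is exactly the route the paper intends: the paper gives no written proof, stating only that the proposition ``is easy to prove by the facts above,'' namely the sheaf property of $\mathcal{G}^{M,\mathcal{R}}$ and Proposition \ref{pro3} (stability under differentiation, products, and sums), which are precisely the ingredients you combine on $\Omega'=\Omega\setminus\mathrm{singsupp}_{M,\mathcal{R}}(u)$. Your closing remark that only finite-order stability is needed (so condition (\ref{(R4)}) is not required) is a correct and worthwhile clarification.
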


We can now introduce a local generalized analysis in the sense of Colombeau
algebra. Indeed, a generalized linear partial differential operator with $%
\mathcal{G}^{M,\mathcal{R}}\left( \Omega \right) $ coefficients $P\left(
x,D\right) $\ is said $(M,\mathcal{R})-$hypoelliptic$\ $in $\Omega ,$\ if%
\begin{equation}
\mathrm{singsupp}_{M,\mathcal{R}}\left( P\left( x,D\right) u\right) =\mathrm{%
singsupp}_{M,\mathcal{R}}\left( u\right) ,\forall u\in \mathcal{G}\left(
\Omega \right)  \label{Hyp}
\end{equation}%
Such a problem in this general form is still in the beginning. Of course, a
microlocalization of the problem (\ref{Hyp})\ will lead to a more precise
information about solutions of generalized linear partial differential
equations. A first attempt is done in the following section.

\section{Affine ultraregular generalized functions}

Although we have defined a tool for a local $(M,\mathcal{R})-$analysis\ in $%
\mathcal{G}\left( \Omega \right) $, it is not clear how to microlocalize
this concept in general. We can do it in the general situation\ of affine
ultraregularity. This is the aim of this section.

\begin{definition}
Define the affine regular sequences
\begin{equation*}
\mathcal{A}=\left\{ \left( N_{m}\right) _{m\in \mathbb{Z}_{+}}\in \mathcal{R}%
:\exists a\geq 0,\exists b\geq 0,N_{m}\leq am+b,\forall m\in \mathbb{Z}%
_{+}\right\} .
\end{equation*}
\end{definition}

A basic $(M,\mathcal{A})-$microlocal analysis\ in $\mathcal{G}\left( \Omega
\right) $\ can be developed due to the following result.

\begin{proposition}
\label{ref4}Let $f=cl\left( f_{\varepsilon }\right) _{\varepsilon }\in
\mathcal{G}_{C}\left( \Omega \right) ,$ then $f$ is $\mathcal{A}-$%
ultraregular of class $M=\left( M_{p}\right) _{p\in \mathbb{Z}_{+}}$ if and
only if $\exists a\geq 0,\exists b\geq 0,\exists C>0,\exists k>0,$ $\exists
\varepsilon _{0}\in \left] 0,1\right] ,\forall \varepsilon \leq \varepsilon
_{0},$ such that
\begin{equation}
\left\vert \mathcal{F}\left( f_{\varepsilon }\right) \left( \xi \right)
\right\vert \leq C\varepsilon ^{-b}\exp \left( -\widetilde{M}\left(
k\varepsilon ^{a}\left\vert \xi \right\vert \right) \right) ,\forall \xi \in
\mathbb{R}^{n},  \label{3-2}
\end{equation}%
where $\mathcal{F}$ denotes the Fourier transform.
\end{proposition}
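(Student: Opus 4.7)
The plan is to prove both directions by a Paley--Wiener-type argument for ultradifferentiable functions, adapted to the $\varepsilon$-parametric compactly supported setting. First I would reduce to a representative that is simultaneously ultraregular and uniformly compactly supported. Since $f \in \mathcal{G}_C(\Omega)$, there is a representative $(g_\varepsilon)$ supported in a compact $K$. If $f$ is $\mathcal{A}$-ultraregular there is also a representative $(f_\varepsilon')\in \mathcal{X}^{M,\mathcal{A}}(\Omega)$; by $(H3)'$ pick $\chi\in \mathcal{D}^{M}(\Omega)$ with $\chi\equiv 1$ on $K$. Then $(\chi f_\varepsilon')$ is uniformly supported in $\mathrm{supp}\,\chi$, lies in $\mathcal{X}^{M,\mathcal{A}}(\Omega)$ by Proposition~\ref{pro3} (since $\chi\in \mathcal{E}^M \hookrightarrow \mathcal{X}^{M,\mathcal{A}}$), and represents $f$ because $(\chi-1)g_\varepsilon\equiv 0$ and $\chi(f_\varepsilon'-g_\varepsilon)\in \mathcal{N}(\Omega)$. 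Hence for the forward direction I may assume $(f_\varepsilon)$ has both properties.

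For the direct implication, given the ultraregular bound $\sup_K|\partial_j^p f_\varepsilon|\le C^{p+1}M_p\varepsilon^{-(ap+b)}$ I use integration by parts
\begin{equation*}
\xi_j^p \mathcal{F}(f_\varepsilon)(\xi)=(-i)^p\mathcal{F}(\partial_j^p f_\varepsilon)(\xi),
\end{equation*}
together with the uniform compact support, to get $|\xi_j|^p|\mathcal{F}(f_\varepsilon)(\xi)|\le |K|C^{p+1}M_p\varepsilon^{-(ap+b)}$. Choosing the index $j$ with $|\xi_j|\ge |\xi|/\sqrt{n}$ and taking the infimum over $p\in\mathbb{Z}_+$ of $M_p/(k\varepsilon^a|\xi|)^p$ with $k=1/(\sqrt{n}\,C)$, the definition of $\widetilde M$ (Proposition~\ref{pro1}, recalling $M_0=1$) immediately gives the desired bound $|\mathcal{F}(f_\varepsilon)(\xi)|\le \tilde C\varepsilon^{-b}\exp(-\widetilde M(k\varepsilon^a|\xi|))$.

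For the converse I invoke Fourier inversion
\begin{equation*}
\partial^\alpha f_\varepsilon(x)=(2\pi)^{-n}\int (i\xi)^\alpha e^{ix\cdot\xi}\mathcal{F}(f_\varepsilon)(\xi)\,d\xi,
\end{equation*}
plug in the hypothesis, and perform the substitution $\eta=k\varepsilon^a\xi$ to obtain $|\partial^\alpha f_\varepsilon(x)|\le C'\varepsilon^{-b}(k\varepsilon^a)^{-n-|\alpha|}\int |\eta|^{|\alpha|}\exp(-\widetilde M(|\eta|))\,d\eta$. Splitting the integral at $|\eta|=1$, the inner piece is bounded by a constant, while for the outer piece I use Proposition~\ref{pro1} to write $\exp(-\widetilde M(|\eta|))\le M_{|\alpha|+n+1}/|\eta|^{|\alpha|+n+1}$, producing a finite integral times $M_{|\alpha|+n+1}$. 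Applying condition $(H2)$ with $p=|\alpha|+n+1$, $q=|\alpha|$ gives $M_{|\alpha|+n+1}\le AH^{|\alpha|+n+1}M_{n+1}M_{|\alpha|}$. Collecting constants yields $|\partial^\alpha f_\varepsilon(x)|\le \widetilde C^{|\alpha|+1} M_{|\alpha|}\varepsilon^{-N_{|\alpha|}}$ with $N_m=am+(an+b)$, which is affine and hence in $\mathcal{A}$, so $(f_\varepsilon)\in \mathcal{X}^{M,\mathcal{A}}(\Omega)$.

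The main obstacle is the reverse direction: coordinating the decay provided by $\exp(-\widetilde M)$ against the polynomial factor $|\xi|^{|\alpha|}$, and converting the resulting $M_{|\alpha|+n+1}$ into a bound of the form $\widetilde C^{|\alpha|+1}M_{|\alpha|}$. This is precisely where the full strength of $(H2)$ is used, and the constants must be tracked carefully so that the exponent $N_{|\alpha|}$ remains affine and thus sits inside the set $\mathcal{A}$.
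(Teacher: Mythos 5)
Your proof is correct and follows essentially the same Paley--Wiener-type route as the paper: integration by parts plus the infimum over the derivative order to produce $\exp(-\widetilde{M}(k\varepsilon^{a}|\xi|))$ in the forward direction, and Fourier inversion plus the associated-function identities to recover the bounds $C^{|\alpha|+1}M_{|\alpha|}\varepsilon^{-(a|\alpha|+b')}$ in the converse. The only differences are cosmetic: your explicit cutoff reduction to a representative that is simultaneously compactly supported and ultraregular (which the paper leaves implicit), and your use of $(H2)$ on the sequence to shift indices by $n+1$ after splitting the integral at $|\eta|=1$, where the paper instead splits $\exp(-\widetilde{M})$ into two factors via Proposition~\ref{pro2} and identifies $\sup_{t}t^{p}e^{-\widetilde{M}(t)}=M_{p}$ via Proposition~\ref{pro1}.
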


\begin{proof}
Suppose that $f=cl\left( f_{\varepsilon }\right) _{\varepsilon }\in \mathcal{%
G}_{C}\left( \Omega \right) \cap \mathcal{G}^{M,\mathcal{A}}\left( \Omega
\right) ,$ then $\exists K$ compact of $\Omega ,\exists C>0,\exists N\in
\mathcal{A},\exists \varepsilon _{1}>0,\forall \alpha \in \mathbb{Z}%
_{+}^{n},\forall x\in K,\forall \varepsilon \leq \varepsilon _{0},$ $\mathrm{%
supp}f_{\varepsilon }\subset K,$ such that
\begin{equation}
\left\vert \partial ^{\alpha }f_{\varepsilon }\right\vert \leq C^{\left\vert
\alpha \right\vert +1}M_{\left\vert \alpha \right\vert }\varepsilon
^{-N_{\left\vert \alpha \right\vert }},
\end{equation}%
so we have, $\forall \alpha \in \mathbb{Z}_{+}^{n},$
\begin{equation*}
\left\vert \xi ^{\alpha }\right\vert \left\vert \mathcal{F}\left(
f_{\varepsilon }\right) \left( \xi \right) \right\vert \leq \left\vert \int
\exp \left( -ix\xi \right) \partial ^{\alpha }f_{\varepsilon }\left(
x\right) dx\right\vert
\end{equation*}%
then, $\exists C>0,\forall \varepsilon \leq \varepsilon _{0},$%
\begin{equation*}
\left\vert \xi \right\vert ^{\left\vert \alpha \right\vert }\left\vert
\mathcal{F}\left( f_{\varepsilon }\right) \left( \xi \right) \right\vert
\leq C^{\left\vert \alpha \right\vert +1}M_{\left\vert \alpha \right\vert
}\varepsilon ^{-N_{\left\vert \alpha \right\vert }}.
\end{equation*}%
Therefore%
\begin{eqnarray*}
\left\vert \mathcal{F}\left( f_{\varepsilon }\right) \left( \xi \right)
\right\vert &\leq &C\inf_{\alpha }\left\{ \frac{C^{\left\vert \alpha
\right\vert }M_{\left\vert \alpha \right\vert }}{\left\vert \xi \right\vert
^{\left\vert \alpha \right\vert }}\varepsilon ^{-N_{\left\vert \alpha
\right\vert }}\right\} \\
&\leq &C\varepsilon ^{-b}\inf_{\alpha }\left\{ \left( \frac{\varepsilon
^{-a}C}{\left\vert \xi \right\vert }\right) ^{\left\vert \alpha \right\vert
}M_{\left\vert \alpha \right\vert }\right\} \\
&\leq &C\varepsilon ^{-b}\exp \left( -\widetilde{M}\left( \frac{\varepsilon
^{a}\left\vert \xi \right\vert }{\sqrt{n}C}\right) \right) .
\end{eqnarray*}%
Hence $\exists C>0,\exists k>0,$%
\begin{equation*}
\left\vert \mathcal{F}\left( f_{\varepsilon }\right) \left( \xi \right)
\right\vert \leq C\varepsilon ^{-b}\exp \left( -\widetilde{M}\left(
k\varepsilon ^{a}\left\vert \xi \right\vert \right) \right) \text{ ,}
\end{equation*}%
i.e. we have (\ref{3-2}).

Suppose now that (\ref{3-2}) is valid, then from inequality (\ref{(KomIneq)}%
) of the Proposition \ref{pro2}, $\exists C,C^{\prime },C^{\prime \prime
}>0,\exists \varepsilon _{0}\in \left] 0,1\right] ,\forall \varepsilon \leq
\varepsilon _{0},$
\begin{eqnarray*}
\left\vert \partial ^{\alpha }f_{\varepsilon }\left( x\right) \right\vert
&\leq &C\varepsilon ^{-b}\sup_{\xi }\left\vert \xi \right\vert ^{\left\vert
\alpha \right\vert }\exp \left( -\widetilde{M}\left( \frac{k}{H}\varepsilon
^{a}\left\vert \xi \right\vert \right) \right) \times \\
&&\times \int \exp \left( -\widetilde{M}\left( \frac{k}{H}\varepsilon
^{a}\left\vert \xi \right\vert \right) \right) d\xi \\
&\leq &C^{\prime }\varepsilon ^{-a\left\vert \alpha \right\vert -b}\sup_{\xi
}\left\vert \frac{k}{H}\varepsilon ^{a}\xi \right\vert ^{\left\vert \alpha
\right\vert }\exp \left( -\widetilde{M}\left( \frac{k}{H}\varepsilon
^{a}\left\vert \xi \right\vert \right) \right) \\
&\leq &C^{\prime \prime }\varepsilon ^{-a\left\vert \alpha \right\vert
-b}\sup_{\eta }\left\vert \eta \right\vert ^{\left\vert \alpha \right\vert
}\exp \left( -\widetilde{M}\left( \left\vert \eta \right\vert \right)
\right) .
\end{eqnarray*}%
Due to the Proposition \ref{pro1}$,$ then $\exists C>0,\exists N\in \mathcal{%
A},$ such that
\begin{equation*}
\left\vert \partial ^{\alpha }f_{\varepsilon }\left( x\right) \right\vert
\leq C^{\left\vert \alpha \right\vert +1}M_{\left\vert \alpha \right\vert
}\varepsilon ^{-N_{\left\vert \alpha \right\vert }},\text{ }
\end{equation*}%
where $C=\max \left( C,\frac{1}{k}\right) $, and $N_{m}=am+b$, then $f\in
\mathcal{G}^{M,\mathcal{A}}\left( \Omega \right) .$
\end{proof}

\begin{corollary}
Let $f=cl\left( f_{\varepsilon }\right) _{\varepsilon }\in \mathcal{G}%
_{C}\left( \Omega \right) ,$ then $f$ is a Gevrey affine\ ultraregular
generalized function of order $\sigma $, i.e. $f\in \mathcal{G}^{\sigma ,%
\mathcal{A}}\left( \Omega \right) $, if and only if $\exists a\geq 0,\exists
b\geq 0,\exists C>0,\exists k>0,$ $\exists \varepsilon _{0}>0,\forall
\varepsilon \leq \varepsilon _{0},$ such that
\begin{equation}
\left\vert \mathcal{F}\left( f_{\varepsilon }\right) \left( \xi \right)
\right\vert \leq C\varepsilon ^{-b}\exp \left( -k\varepsilon ^{a}\left\vert
\xi \right\vert ^{\frac{1}{\sigma }}\right) ,\forall \xi \in \mathbb{R}^{n}.
\end{equation}%
In particular, $f$ is a Gevrey generalized function of order $\sigma $, i.e.
$f\in \mathcal{G}^{\sigma ,\infty }\left( \Omega \right) $, if and only if $%
\exists b\geq 0,\exists C>0,\exists k>0,$ $\exists \varepsilon
_{0}>0,\forall \varepsilon \leq \varepsilon _{0},$ such that
\begin{equation}
\left\vert \mathcal{F}\left( f_{\varepsilon }\right) \left( \xi \right)
\right\vert \leq C\varepsilon ^{-b}\exp \left( -k\left\vert \xi \right\vert
^{\frac{1}{\sigma }}\right) ,\forall \xi \in \mathbb{R}^{n}.
\end{equation}
\end{corollary}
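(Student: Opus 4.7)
The plan is to specialize Proposition \ref{ref4} to the Gevrey weight $M_p = p!^{\sigma}$; the whole content of the corollary is to replace the abstract associated function $\widetilde{M}$ by the explicit power $|\xi|^{1/\sigma}$. The core analytic input is a two-sided estimate
$$c_1 t^{1/\sigma} - C_0 \leq \widetilde{M}(t) \leq c_2 t^{1/\sigma} + C_0, \qquad t > 0,$$
for the Gevrey associated function $\widetilde{M}(t)=\sup_p \ln(t^{p}/p!^{\sigma})$. I would obtain this by the standard optimization argument: the supremum is attained near $p \sim t^{1/\sigma}$, and Stirling's formula on $p!$ yields both bounds (upper bound by $p!^{\sigma} \geq (p/e)^{p\sigma}/C$, which, maximized in the continuous variable, gives the exponent $\sigma t^{1/\sigma}$; lower bound by plugging in the integer $p = \lfloor t^{1/\sigma}\rfloor$). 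This computation is the one genuinely analytic step.

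Given that estimate, the corollary reduces to bookkeeping of constants applied to Proposition \ref{ref4}. For necessity, assume $f \in \mathcal{G}^{\sigma,\mathcal{A}}(\Omega)$ and invoke Proposition \ref{ref4}; inserting the lower bound on $\widetilde{M}$ gives
$$\exp(-\widetilde{M}(k\varepsilon^a |\xi|)) \leq e^{C_0}\exp\bigl(-c_1 k^{1/\sigma}\varepsilon^{a/\sigma}|\xi|^{1/\sigma}\bigr),$$
and after relabeling $a/\sigma \mapsto a$ and absorbing $e^{C_0}$ and $c_1 k^{1/\sigma}$ into $C$ and $k$ respectively, the stated Fourier estimate emerges. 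For sufficiency, conversely, the upper bound on $\widetilde{M}$ yields $\widetilde{M}(k'\varepsilon^{a'}|\xi|) \leq c_2 (k')^{1/\sigma}\varepsilon^{a'/\sigma}|\xi|^{1/\sigma} + C_0$; choosing $a' = a\sigma$ and $k'$ so small that $c_2(k')^{1/\sigma} \leq k$, the hypothesized bound on $|\mathcal{F}(f_\varepsilon)|$ feeds directly into Proposition \ref{ref4} with the affine sequence $N_m = a\sigma\,m + b$, which lies in $\mathcal{A}$, producing $f \in \mathcal{G}^{\sigma,\mathcal{A}}(\Omega)$.

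The ``in particular'' statement for $\mathcal{G}^{\sigma,\infty} = \mathcal{G}^{\sigma,\mathcal{B}}$ follows by specialization: bounded sequences form exactly the zero-slope subset of $\mathcal{A}$, so the same argument with the affine parameter $a = 0$ and $\varepsilon^{a} \equiv 1$ yields the claimed bound $|\mathcal{F}(f_\varepsilon)(\xi)| \leq C\varepsilon^{-b}\exp(-k|\xi|^{1/\sigma})$. The only point requiring care is the two-sided Stirling estimate on $\widetilde{M}$, and even this is classical; once it is available, the rest of the proof is purely algebraic manipulation of the constants appearing in Proposition \ref{ref4}.
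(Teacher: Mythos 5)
Your proposal is correct and is exactly the intended argument: the paper gives no separate proof of this corollary, treating it as the specialization of Proposition \ref{ref4} to $M_p=p!^{\sigma}$ via the classical two-sided comparison $\widetilde{M}(t)\asymp t^{1/\sigma}$ (up to additive constants), which is precisely what you carry out, including the harmless rescaling of the parameter $a$ by $\sigma$ and the observation that the case $\mathcal{R}=\mathcal{B}$ corresponds to running the same argument with $a=0$.
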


Using the above results, we can define the concept of $\mathcal{G}^{M,%
\mathcal{A}}-$wave front of $u\in \mathcal{G}\left( \Omega \right) $ and
give the basic elements of\ a $(M,\mathcal{A})-$generalized microlocal
analysis within the Colombeau algebra $\mathcal{G}\left( \Omega \right) $.

\begin{definition}
Define the cone $\sum_{\mathcal{A}}^{M}\left( f\right) \subset \mathbb{R}%
^{n}\backslash \left\{ 0\right\} ,f\in \mathcal{G}_{C}\left( \Omega \right) $%
, as the complement of the set of points having a conic neighborhood $\Gamma
$ such that $\exists a\geq 0,\exists b\geq 0,\exists C>0,\exists k>0,$ $%
\exists \varepsilon _{0}>0,\forall \varepsilon \leq \varepsilon _{0},$ such
that
\begin{equation}
\left\vert \mathcal{F}\left( f_{\varepsilon }\right) \left( \xi \right)
\right\vert \leq C\varepsilon ^{-b}\exp \left( -\widetilde{M}\left(
k\varepsilon ^{a}\left\vert \xi \right\vert \right) \right) ,\forall \xi \in
\mathbb{R}^{n}.  \label{3-4}
\end{equation}
\end{definition}

\begin{proposition}
\label{ref5-1}For every $f\in \mathcal{G}_{C}\left( \Omega \right) ,$ we have

1. The set $\sum_{\mathcal{A}}^{M}\left( f\right) $ is a closed subset.

2. $\sum_{\mathcal{A}}^{M}\left( f\right) =\emptyset \Longleftrightarrow
f\in \mathcal{G}^{M,\mathcal{A}}\left( \Omega \right) .$
\end{proposition}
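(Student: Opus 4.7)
The plan is to prove part 1 by showing the complement is open, and part 2 by combining Proposition \ref{ref4} with a compactness argument on the unit sphere.

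For part 1, I would argue directly from the definition: the complement $\mathbb{R}^{n}\setminus (\{0\} \cup \sum_{\mathcal{A}}^{M}(f))$ consists of points admitting a conic neighborhood on which the estimate (\ref{3-4}) holds. If $\xi_{0}$ lies in this complement with associated open conic neighborhood $\Gamma$, then any $\xi_{1}\in\Gamma$ also admits $\Gamma$ itself as a conic neighborhood in which the same estimate is valid with the same constants $(a,b,C,k,\varepsilon_{0})$. Hence the complement is open in $\mathbb{R}^{n}\setminus\{0\}$, so $\sum_{\mathcal{A}}^{M}(f)$ is closed.

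For part 2, the reverse implication ($\Leftarrow$) is immediate from Proposition \ref{ref4}: if $f\in\mathcal{G}^{M,\mathcal{A}}(\Omega)$, then (\ref{3-2}) holds on all of $\mathbb{R}^{n}$, so every nonzero $\xi$ admits the trivial conic neighborhood $\mathbb{R}^{n}\setminus\{0\}$ satisfying (\ref{3-4}); thus $\sum_{\mathcal{A}}^{M}(f)=\emptyset$.

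The forward implication ($\Rightarrow$) is the main content. Assuming $\sum_{\mathcal{A}}^{M}(f)=\emptyset$, every direction has an open conic neighborhood on which (\ref{3-4}) holds with some parameters $(a_{\xi},b_{\xi},C_{\xi},k_{\xi},\varepsilon_{\xi})$. Intersecting these neighborhoods with the unit sphere $S^{n-1}$ yields an open cover, from which compactness extracts a finite subcover $\Gamma_{1},\dots,\Gamma_{N}$ with parameters $(a_{j},b_{j},C_{j},k_{j},\varepsilon_{j})$. The task is to unify these into one estimate valid on all of $\mathbb{R}^{n}\setminus\{0\}$. Setting $a=\max_{j}a_{j}$, $b=\max_{j}b_{j}$, $C=\max_{j}C_{j}$, $k=\min_{j}k_{j}$ and $\varepsilon_{0}=\min_{j}\varepsilon_{j}$, I would exploit the monotonicity of $\widetilde{M}$ together with $\varepsilon\in\left]0,1\right]$ to get $k\varepsilon^{a}\leq k_{j}\varepsilon^{a_{j}}$, so that $\exp(-\widetilde{M}(k\varepsilon^{a}|\xi|))\geq\exp(-\widetilde{M}(k_{j}\varepsilon^{a_{j}}|\xi|))$, and similarly $\varepsilon^{-b}\geq\varepsilon^{-b_{j}}$. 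This yields a single estimate of the form (\ref{3-2}) on each $\Gamma_{j}$, hence everywhere, and Proposition \ref{ref4} concludes $f\in\mathcal{G}^{M,\mathcal{A}}(\Omega)$.

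The main obstacle is the unification step: one must verify that dominating the separate exponential-type bounds by a single one is consistent with the monotonicity and restriction $\varepsilon\leq 1$, and that the cones truly cover $\mathbb{R}^{n}\setminus\{0\}$ (conic extensions from the spherical cover). The rest is bookkeeping once Proposition \ref{ref4} is invoked in both directions.
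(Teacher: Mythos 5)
Your proof is correct and follows essentially the same route the paper intends: part 1 is read off from the definition (the complement is a union of open cones), and part 2 is Proposition \ref{ref4} combined with the standard compactness-of-$S^{n-1}$ argument to merge the finitely many parameters $(a_j,b_j,C_j,k_j,\varepsilon_j)$ into one estimate, which is exactly the step the paper leaves implicit in its one-line proof. Your unification via $a=\max a_j$, $b=\max b_j$, $k=\min k_j$ and the monotonicity of $\widetilde{M}$ with $\varepsilon\le 1$ is sound, so no further comment is needed.
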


\begin{proof}
The proof of 1. is clear from the definition, and 2. holds from Proposition %
\ref{ref4}.
\end{proof}

\begin{proposition}
\label{ref5} For every $f\in \mathcal{G}_{C}\left( \Omega \right) ,$ we have
\begin{equation*}
\sum_{\mathcal{A}}^{M}\left( \psi f\right) \subset \sum_{\mathcal{A}%
}^{M}\left( f\right) ,\forall \psi \in \mathcal{E}^{M}\left( \Omega \right) .
\end{equation*}
\end{proposition}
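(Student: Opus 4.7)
The plan is to prove the contrapositive: if $\xi_{0}$ admits a conic neighborhood on which the estimate \eqref{3-4} holds for $f$, then a smaller conic neighborhood of $\xi_{0}$ admits an analogous estimate for $\psi f$. Since $f\in\mathcal{G}_{C}(\Omega)$, I select a representative $(f_{\varepsilon})_{\varepsilon}$ with a common compact support $K\subset\Omega$. The condition $(H3)^{\prime}$ furnishes a cutoff $\chi\in\mathcal{D}^{M}(\Omega)$ equal to $1$ in a neighborhood of $K$, so that $\psi f_{\varepsilon}=(\chi\psi)f_{\varepsilon}$ and $\chi\psi\in\mathcal{D}^{M}(\mathbb{R}^{n})$ (using that $\mathcal{E}^{M}(\Omega)$ is an algebra). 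Replacing $\psi$ by $\chi\psi$, I may assume $\psi$ has compact support. A standard Paley-Wiener calculation based on integration by parts, the bound $|\partial^{\alpha}\psi|\leq c^{|\alpha|+1}M_{|\alpha|}$, and Proposition \ref{pro1} then yields
\begin{equation*}
|\mathcal{F}(\psi)(\eta)|\leq C_{0}\exp\bigl(-\widetilde{M}(k_{0}|\eta|)\bigr),\qquad\eta\in\mathbb{R}^{n},
\end{equation*}
for some $C_{0},k_{0}>0$; in particular $\mathcal{F}(\psi)\in L^{1}(\mathbb{R}^{n})$.

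Next, I use $\mathcal{F}(\psi f_{\varepsilon})=(2\pi)^{-n}\,\mathcal{F}(\psi)\ast\mathcal{F}(f_{\varepsilon})$ and choose a closed conic neighborhood $\Gamma_{0}$ of $\xi_{0}$ compactly contained in the cone $\Gamma$ from the hypothesis, together with $c\in(0,1)$ such that $\xi\in\Gamma_{0}$ and $|\eta|\leq c|\xi|$ force $\xi-\eta\in\Gamma$. The convolution is split accordingly as $I_{1}+I_{2}$ over $\{|\eta|\leq c|\xi|\}$ and $\{|\eta|>c|\xi|\}$. On $I_{1}$ one has $\xi-\eta\in\Gamma$ and $|\xi-\eta|\geq(1-c)|\xi|$; applying hypothesis \eqref{3-4} to $\mathcal{F}(f_{\varepsilon})(\xi-\eta)$, using monotonicity of $\widetilde{M}$, and integrating against $\mathcal{F}(\psi)\in L^{1}$ gives
\begin{equation*}
|I_{1}|\leq C_{1}\varepsilon^{-b}\exp\bigl(-\widetilde{M}(k(1-c)\varepsilon^{a}|\xi|)\bigr).
\end{equation*}

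The main obstacle is the outer region $I_{2}$, where only the crude moderate estimate $|\mathcal{F}(f_{\varepsilon})(\zeta)|\leq C_{2}\varepsilon^{-m}$ (coming from $f_{\varepsilon}$ being supported in $K$) is available, so the exponential-type decay in $|\xi|$ must be extracted entirely from $\mathcal{F}(\psi)$. The resolution is the inequality \eqref{(KomIneq)} of Proposition \ref{pro2}, which provides
\begin{equation*}
\exp\bigl(-\widetilde{M}(k_{0}|\eta|)\bigr)\leq A\exp\bigl(-\widetilde{M}(k_{0}|\eta|/H)\bigr)\cdot\exp\bigl(-\widetilde{M}(k_{0}|\eta|/H)\bigr);
\end{equation*}
keeping one factor pointwise (bounded on $|\eta|>c|\xi|$ by $\exp(-\widetilde{M}(k_{0}c|\xi|/H))$) and integrating the other yields $|I_{2}|\leq C_{3}\varepsilon^{-m}\exp(-\widetilde{M}(k_{0}c|\xi|/H))$. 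Since $\varepsilon^{a}\leq 1$ and $\widetilde{M}$ is non-decreasing, both bounds combine into a single estimate of the form
\begin{equation*}
|\mathcal{F}(\psi f_{\varepsilon})(\xi)|\leq C\varepsilon^{-b^{\prime}}\exp\bigl(-\widetilde{M}(k^{\prime}\varepsilon^{a}|\xi|)\bigr),\qquad\xi\in\Gamma_{0},
\end{equation*}
with $k^{\prime}=\min(k(1-c),k_{0}c/H)$ and $b^{\prime}=\max(b,m)$, hence $\xi_{0}\notin\sum_{\mathcal{A}}^{M}(\psi f)$.
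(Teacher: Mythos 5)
Your proof is correct and follows essentially the same route as the paper's: reduce to a compactly supported ultradifferentiable multiplier via a $\mathcal{D}^{M}$ cutoff (using $(H3)'$), invoke the Paley--Wiener-type decay of its Fourier transform, and split the convolution into a region where the frequency argument of $f_{\varepsilon}$ stays in the cone, where the hypothesis (\ref{3-4}) applies, and a complementary region handled by the crude moderate bound together with the splitting of $\widetilde{M}$ provided by Proposition \ref{pro2}. The only cosmetic differences are your decomposition $\{|\eta|\leq c|\xi|\}$ versus the paper's $\{|\xi-\eta|\leq\delta(|\xi|+|\eta|)\}$, and your uniform bound $|\mathcal{F}(f_{\varepsilon})|\leq C\varepsilon^{-m}$ in place of the paper's $c\varepsilon^{-q}|\xi|^{m}$.
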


\begin{proof}
Let $\xi _{0}\notin \sum_{\mathcal{A}}^{M}\left( f\right) $, i.e. $\exists
\Gamma $ a conic neighborhood of $\xi _{0},\exists a\geq 0,\exists
b>0,\exists k_{1}>0,\exists c_{1}>0,\exists \varepsilon _{1}\in \left] 0,1%
\right] ,$ such that $\forall \xi \in \Gamma ,\forall \varepsilon \leq
\varepsilon _{1},$
\begin{equation}
\left\vert \mathcal{F}\left( f_{\varepsilon }\right) \left( \xi \right)
\right\vert \leq c_{1}\varepsilon ^{-b}\exp \left( -\widetilde{M}\left(
k_{1}\varepsilon ^{a}\left\vert \xi \right\vert \right) \right) ,
\label{3-5}
\end{equation}%
Let $\chi \in \mathcal{D}^{M}\left( \Omega \right) ,$ $\chi =1$ on
neighborhood of $\mathrm{supp}f$, then $\chi \psi \in \mathcal{D}^{M}\left(
\Omega \right) ,\forall \psi \in \mathcal{E}^{M}\left( \Omega \right) $,
hence, see \cite{Kom}, $\exists k_{2}>0,\exists c_{2}>0,\forall \xi \in
\mathbb{R}^{n},$
\begin{equation}
\left\vert \mathcal{F}\left( \chi \psi \right) \left( \xi \right)
\right\vert \leq c_{2}\exp \left( -\widetilde{M}\left( k_{2}\left\vert \xi
\right\vert \right) \right) .  \label{3-6}
\end{equation}%
Let $\Lambda $ be a conic neighborhood of $\xi _{0}$ such that, $\overline{%
\Lambda }\subset \Gamma ,$ then we have, for $\xi \in \Lambda ,$
\begin{eqnarray*}
\mathcal{F}\left( \chi \psi f_{\varepsilon }\right) \left( \xi \right)
&=&\int_{\mathbb{R}^{n}}\mathcal{F}\left( f_{\varepsilon }\right) \left(
\eta \right) \mathcal{F}\left( \chi \psi \right) \left( \xi -\eta \right)
d\eta \\
&=&\int_{A}\mathcal{F}\left( f_{\varepsilon }\right) \left( \eta \right)
\mathcal{F}\left( \chi \psi \right) \left( \xi -\eta \right) d\eta + \\
&&+\int_{B}\mathcal{F}\left( f_{\varepsilon }\right) \left( \eta \right)
\mathcal{F}\left( \chi \psi \right) \left( \xi -\eta \right) d\eta ,
\end{eqnarray*}%
where $A=\left\{ \eta ;\left\vert \xi -\eta \right\vert \leq \delta \left(
\left\vert \xi \right\vert +\left\vert \eta \right\vert \right) \right\} $
and $B=\left\{ \eta ;\left\vert \xi -\eta \right\vert >\delta \left(
\left\vert \xi \right\vert +\left\vert \eta \right\vert \right) \right\} $.
Take $\delta $ sufficiently small such that $\eta \in \Gamma ,\frac{%
\left\vert \xi \right\vert }{2}<\left\vert \eta \right\vert <2\left\vert \xi
\right\vert ,\forall \in \Lambda ,\forall \eta \in A$, then $\exists
c>0,\forall \varepsilon \leq \varepsilon _{1},$%
\begin{eqnarray*}
\left\vert \int_{A}\mathcal{F}\left( f_{\varepsilon }\right) \left( \eta
\right) \mathcal{F}\left( \chi \psi \right) \left( \xi -\eta \right) d\eta
\right\vert &\leq &c\varepsilon ^{-b}\exp \left( -\widetilde{M}\left(
k_{1}\varepsilon ^{a}\frac{\left\vert \xi \right\vert }{2}\right) \right)
\times \\
&&\times \int_{A}\exp \left( -\widetilde{M}\left( k_{2}\left\vert \xi -\eta
\right\vert \right) \right) d\eta ,
\end{eqnarray*}%
so $\exists c>0,\exists k>0,$
\begin{equation}
\left\vert \int_{A}\mathcal{F}\left( f_{\varepsilon }\right) \left( \eta
\right) \mathcal{F}\left( \chi \psi \right) \left( \xi -\eta \right) d\eta
\right\vert \leq c\varepsilon ^{-b}\exp \left( -\widetilde{M}\left(
k\varepsilon ^{a}\left\vert \xi \right\vert \right) \right) .  \label{3-9}
\end{equation}%
As $f\in \mathcal{G}_{C}\left( \Omega \right) ,$ then $\exists q\in \mathbb{Z%
}_{+},\exists m>0,\exists c>0,\exists \varepsilon _{2}>0,\forall \varepsilon
\leq \varepsilon _{2},$
\begin{equation*}
\left\vert \mathcal{F}\left( f_{\varepsilon }\right) \left( \xi \right)
\right\vert \leq c\varepsilon ^{-q}\left\vert \xi \right\vert ^{m}.
\end{equation*}%
Hence for $\varepsilon \leq \min \left( \varepsilon _{1},\varepsilon
_{2}\right) ,$ $\exists c>0$, such that we have
\begin{eqnarray*}
\left\vert \int_{B}\mathcal{F}\left( f_{\varepsilon }\right) \left( \eta
\right) \mathcal{F}\left( \chi \psi \right) \left( \xi -\eta \right) d\eta
\right\vert &\leq &c\varepsilon ^{-q}\int_{B}\left\vert \eta \right\vert
^{m}\exp \left( -\widetilde{M}\left( k_{2}\left\vert \xi -\eta \right\vert
\right) \right) d\eta \\
&\leq &c\varepsilon ^{-q}\int_{B}\left\vert \eta \right\vert ^{m}\exp \left(
-\widetilde{M}\left( k_{2}\delta \left( \left\vert \xi \right\vert
+\left\vert \eta \right\vert \right) \right) \right) d\eta .
\end{eqnarray*}%
We have, from Proposition \ref{pro2}, i.e. inequality (\ref{(KomIneq)}), $%
\exists H>0,\exists A>0,\forall t_{1}>0,\forall t_{2}>0,$
\begin{equation}
-\widetilde{M}\left( t_{1}+t_{2}\right) \leq -\widetilde{M}\left( \frac{t_{1}%
}{H}\right) -\widetilde{M}\left( \frac{t_{2}}{H}\right) +\ln A\text{ ,}
\end{equation}%
so
\begin{eqnarray*}
\left\vert \int_{B}\mathcal{F}\left( f_{\varepsilon }\right) \left( \eta
\right) \mathcal{F}\left( \chi \psi \right) \left( \xi -\eta \right) d\eta
\right\vert &\leq &cA\varepsilon ^{-q}\exp \left( -\widetilde{M}\left( \frac{%
k_{2}\delta }{H}\left\vert \xi \right\vert \right) \right) \times \\
&&\times \int_{B}\left\vert \eta \right\vert ^{m}\exp \left( -\widetilde{M}%
\left( \frac{k_{2}\delta }{H}\left\vert \eta \right\vert \right) \right)
d\eta .
\end{eqnarray*}%
Hence $\exists c>0,\exists k>0,$ such that%
\begin{equation}
\left\vert \int_{B}\widehat{f_{\varepsilon }}\left( \eta \right) \widehat{%
\psi }\left( \xi -\eta \right) d\eta \right\vert \leq c\varepsilon ^{-q}\exp
\left( -\widetilde{M}\left( k\varepsilon ^{a}\left\vert \xi \right\vert
\right) \right) .  \label{3-10}
\end{equation}%
Consequently, (\ref{3-9}) and (\ref{3-10}) give $\xi _{0}\notin \sum_{%
\mathcal{A}}^{M}\left( \psi f\right) .$
\end{proof}

We define $\sum_{\mathcal{A}}^{M}\left( f\right) $ of a generalized function
$f$ at a point $x_{0}$ and the affine wave front set of class $M$ in $%
\mathcal{G}\left( \Omega \right) .$

\begin{definition}
Let $f\in \mathcal{G}\left( \Omega \right) $ and $x_{0}\in \Omega $, the
cone of affine singular directions of class $M=(M_{p})$ of $f$ at $x_{0}$,
denoted by $\sum_{\mathcal{A},x_{0}}^{M}\left( f\right) $, is
\begin{equation}
\sum\nolimits_{\mathcal{A},x_{0}}^{M}\left( f\right) =\bigcap \left\{
\sum\nolimits_{\mathcal{A}}^{M}\left( \phi f\right) :\phi \in \mathcal{D}%
^{M}\left( \Omega \right) ,\phi =1\text{ on a neighborhood of }x_{0}\right\}
.  \label{3-11}
\end{equation}
\end{definition}

The following lemma gives the relation between the local and microlocal $(M,%
\mathcal{A})-$analysis in $\mathcal{G}\left( \Omega \right) $.

\begin{lemma}
\label{lem1}Let $f\in \mathcal{G}\left( \Omega \right) $, then
\begin{equation*}
\sum\nolimits_{\mathcal{A},x_{0}}^{M}\left( f\right) =\emptyset
\Longleftrightarrow x_{0}\notin \mathrm{singsupp}_{M,\mathcal{A}}\left(
f\right) .
\end{equation*}
\end{lemma}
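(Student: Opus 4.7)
The plan is to prove the two directions separately. For the easy direction $(\Leftarrow)$, suppose $x_{0}\notin\mathrm{singsupp}_{M,\mathcal{A}}(f)$, so there is an open neighbourhood $U\subset\Omega$ of $x_{0}$ with $f\in\mathcal{G}^{M,\mathcal{A}}(U)$. I choose $\phi\in\mathcal{D}^{M}(\Omega)$ with $\mathrm{supp}\,\phi\subset U$ and $\phi\equiv 1$ on a smaller neighbourhood of $x_{0}$. Because $\mathcal{G}^{M,\mathcal{A}}$ is a differential subalgebra containing $\mathcal{E}^{M}$, the product $\phi f$ lies in $\mathcal{G}_{C}(\Omega)\cap\mathcal{G}^{M,\mathcal{A}}(\Omega)$, and Proposition \ref{ref5-1}(2) gives $\sum_{\mathcal{A}}^{M}(\phi f)=\emptyset$. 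Since this $\phi$ is admissible in the intersection defining $\sum_{\mathcal{A},x_{0}}^{M}(f)$, that intersection is empty.

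For the harder direction $(\Rightarrow)$, my strategy is a finite-intersection / single-cut-off argument. Assume $\sum_{\mathcal{A},x_{0}}^{M}(f)=\emptyset$. By Proposition \ref{ref5-1}(1) each $\sum_{\mathcal{A}}^{M}(\phi f)$ is a closed cone in $\mathbb{R}^{n}\setminus\{0\}$, so intersecting with the unit sphere $S^{n-1}$ produces a family of closed sets with empty intersection. Compactness of $S^{n-1}$ yields finitely many cut-offs $\phi_{1},\ldots,\phi_{k}\in\mathcal{D}^{M}(\Omega)$, each equal to $1$ on a neighbourhood of $x_{0}$, such that $\bigcap_{j=1}^{k}\sum_{\mathcal{A}}^{M}(\phi_{j}f)=\emptyset$. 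Next I set $\phi=\phi_{1}\phi_{2}\cdots\phi_{k}\in\mathcal{D}^{M}(\Omega)$, which still equals $1$ on some neighbourhood $V$ of $x_{0}$, and write $\phi f=\psi_{j}(\phi_{j}f)$ with $\psi_{j}=\prod_{i\neq j}\phi_{i}\in\mathcal{E}^{M}(\Omega)$. Applying Proposition \ref{ref5} to each $j$ gives $\sum_{\mathcal{A}}^{M}(\phi f)\subset\sum_{\mathcal{A}}^{M}(\phi_{j}f)$, hence $\sum_{\mathcal{A}}^{M}(\phi f)\subset\bigcap_{j}\sum_{\mathcal{A}}^{M}(\phi_{j}f)=\emptyset$. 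Proposition \ref{ref5-1}(2) then yields $\phi f\in\mathcal{G}^{M,\mathcal{A}}(\Omega)$; restricting to $V$, where $\phi\equiv 1$, and invoking the sheaf property of $\mathcal{G}^{M,\mathcal{A}}$, I conclude $f|_{V}\in\mathcal{G}^{M,\mathcal{A}}(V)$, so $x_{0}\notin\mathrm{singsupp}_{M,\mathcal{A}}(f)$.

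The main obstacle is the $(\Rightarrow)$ direction: reducing the intersection over the uncountable family of admissible cut-offs $\phi$ to the vanishing of a single $\sum_{\mathcal{A}}^{M}(\phi f)$. This relies crucially on two previously established facts, namely the closedness of each $\sum_{\mathcal{A}}^{M}(\phi f)$ (Proposition \ref{ref5-1}(1)) combined with compactness of $S^{n-1}$ to pass to finitely many $\phi_{j}$, and the microlocalization inclusion $\sum_{\mathcal{A}}^{M}(\psi g)\subset\sum_{\mathcal{A}}^{M}(g)$ for $\psi\in\mathcal{E}^{M}$ (Proposition \ref{ref5}), which is what permits amalgamating the several cut-offs into a single product cut-off without enlarging the singular cone. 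Once this is in place, the rest is a direct appeal to the sheaf property and to the Fourier-side characterization of $\mathcal{G}^{M,\mathcal{A}}$-regularity.
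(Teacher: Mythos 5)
Your proof is correct and follows essentially the same route as the argument the paper points to (the analogous lemma in the Gevrey case): the easy direction via Proposition \ref{ref5-1}, and the converse via closedness of the cones $\sum_{\mathcal{A}}^{M}(\phi f)$, compactness of the unit sphere to extract finitely many cut-offs, the product cut-off $\phi=\phi_{1}\cdots\phi_{k}$ together with Proposition \ref{ref5}, and then Proposition \ref{ref5-1}(2) plus the sheaf/restriction property. The only point stated a bit quickly is that in the $(\Leftarrow)$ direction $\phi f\in\mathcal{G}^{M,\mathcal{A}}(\Omega)$ requires gluing the regular piece on $U$ with the zero function off $\mathrm{supp}\,\phi$, which is exactly the sheaf argument you invoke elsewhere, so the proof stands.
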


\begin{proof}
See the proof of the analogical Lemma in \cite{benbou2}.
\end{proof}

\begin{definition}
A point $\left( x_{0},\xi _{0}\right) \notin WF_{\mathcal{A}}^{M}\left(
f\right) \subset \Omega \times \mathbb{R}^{n}\backslash \left\{ 0\right\} ,$
if there exist $\phi \in \mathcal{D}^{M}\left( \Omega \right) ,\phi \equiv 1$
on a neighborhood of $x_{0}$, a conic neighborhood $\Gamma $ of $\xi _{0}$,
and numbers $a\geq 0,b\geq 0,k>0,c>0,\varepsilon _{0}\in \left] 0,1\right] ,$
such that $\forall \varepsilon \leq \varepsilon _{0},\forall \xi \in \Gamma
, $
\begin{equation*}
\left\vert \mathcal{F}\left( \phi f_{\varepsilon }\right) \left( \xi \right)
\right\vert \leq c\varepsilon ^{-b}\exp \left( -\widetilde{M}\left(
k\varepsilon ^{a}\left\vert \xi \right\vert \right) \right) .
\end{equation*}
\end{definition}

\begin{remark}
A point $\left( x_{0},\xi _{0}\right) \notin WF_{\mathcal{A}}^{M}\left(
f\right) \subset \Omega \times \mathbb{R}^{n}\backslash \left\{ 0\right\} $
means $\xi _{0}\notin \sum\nolimits_{\mathcal{A},x_{0}}^{M}\left( f\right) .$
\end{remark}

The basic properties of $WF_{\mathcal{A}}^{M}$ are given in the following
proposition.

\begin{proposition}
Let $f\in \mathcal{G}\left( \Omega \right) $, then

1) The projection of $WF_{\mathcal{A}}^{M}\left( f\right) $ on $\Omega $ is
the $\mathrm{singsupp}_{M,\mathcal{A}}\left( f\right) .$

2) If $f\in \mathcal{G}_{C}\left( \Omega \right) ,$ then the projection of $%
WF_{\mathcal{A}}^{M}\left( f\right) $ on $\mathbb{R}^{n}\backslash \left\{
0\right\} $ is $\sum_{\mathcal{A}}^{M}\left( f\right) .$

3) $WF_{\mathcal{A}}^{M}\left( \partial ^{\alpha }f\right) \subset WF_{%
\mathcal{A}}^{M}\left( f\right) ,\forall \alpha \in \mathbb{Z}_{+}^{n}.$

4) $WF_{\mathcal{A}}^{M}\left( gf\right) \subset WF_{\mathcal{A}}^{M}\left(
f\right) ,\forall g\in \mathcal{G}^{M,\mathcal{A}}\left( \Omega \right) .$
\end{proposition}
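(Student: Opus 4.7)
The four assertions separate cleanly, so I would address them in turn, leaning heavily on the preceding results of the section.

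For assertion 1, my plan is to chain the remark immediately preceding the proposition with Lemma \ref{lem1}. A point $x_0$ lies in the projection of $WF_{\mathcal{A}}^{M}(f)$ on $\Omega$ iff some $\xi_0\neq 0$ has $(x_0,\xi_0)\in WF_{\mathcal{A}}^{M}(f)$, iff $\sum_{\mathcal{A},x_0}^{M}(f)\neq\emptyset$, iff (by Lemma \ref{lem1}) $x_0\in\mathrm{singsupp}_{M,\mathcal{A}}(f)$.

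Assertion 2 requires both inclusions. The inclusion ``projection $\subset \sum_{\mathcal{A}}^{M}(f)$'' is immediate from the definition of $\sum_{\mathcal{A},x_0}^{M}(f)$ as an intersection of sets $\sum_{\mathcal{A}}^{M}(\phi f)$ combined with Proposition \ref{ref5}. For the reverse inclusion I would argue by contradiction. Let $\xi_0\in\sum_{\mathcal{A}}^{M}(f)$ and let $K$ be a compact set containing the supports of the representatives $(f_{\varepsilon})$. If $\xi_0\notin\sum_{\mathcal{A},x}^{M}(f)$ for every $x\in K$, then for each $x\in K$ I can pick a $\mathcal{D}^{M}$-cutoff $\phi_x$ equal to $1$ on an open neighborhood $U_x$ of $x$ with $\xi_0\notin\sum_{\mathcal{A}}^{M}(\phi_x f)$. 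By compactness, finitely many $U_{x_i}$ cover $K$, and condition $(H3)'$ provides a Denjoy--Carleman partition of unity $\psi_i\in\mathcal{D}^{M}(\Omega)$ subordinate to this cover with $\mathrm{supp}\,\psi_i\subset\{\phi_{x_i}=1\}$. Then $\psi_i f = \psi_i\phi_{x_i}f$, so Proposition \ref{ref5} (applied to $\phi_{x_i}f\in\mathcal{G}_C(\Omega)$) gives $\xi_0\notin\sum_{\mathcal{A}}^{M}(\psi_i f)$. Finally $f=\sum_i \psi_i f$ in $\mathcal{G}(\Omega)$ because $\sum_i\psi_i=1$ on a neighborhood of $K$, and a direct estimate of $\mathcal{F}$ on the intersection of the finitely many conic neighborhoods provided by each $\psi_i f$ shows $\xi_0\notin\sum_{\mathcal{A}}^{M}(f)$, a contradiction. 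This partition-of-unity step (and the verification that the estimate is stable under finite sums) is the main obstacle of the whole proposition.

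Assertions 3 and 4 share a common template. Given $(x_0,\xi_0)\notin WF_{\mathcal{A}}^{M}(f)$, fix $\phi_1\in\mathcal{D}^{M}(\Omega)$ equal to $1$ near $x_0$ realizing the estimate on a conic neighborhood $\Gamma$ of $\xi_0$, and choose $\phi_2\in\mathcal{D}^{M}(\Omega)$ with $\phi_2=1$ near $x_0$ and $\phi_1=1$ on a neighborhood of $\mathrm{supp}\,\phi_2$. For (3), I then use $\phi_2\partial^{\alpha}f_{\varepsilon}=\phi_2\partial^{\alpha}(\phi_1 f_{\varepsilon})$, whose Fourier transform gains a factor $(i\xi)^{\alpha}$. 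To absorb $|\xi|^{|\alpha|}$ I split $\exp(-\widetilde{M}(k\varepsilon^{a}|\xi|))$ using Proposition \ref{pro2} into $\exp(-\widetilde{M}(k\varepsilon^{a}|\xi|/H))^2$ up to a constant, and bound $|\xi|^{|\alpha|}\exp(-\widetilde{M}(k\varepsilon^{a}|\xi|/H))$ by $M_{|\alpha|}(H/k)^{|\alpha|}\varepsilon^{-a|\alpha|}$ via Proposition \ref{pro1}; since $\alpha$ is fixed this amounts to changing $b$ to $b+a|\alpha|$. Multiplication by $\phi_2$ then only shrinks the cone, via the convolution estimate already executed in Proposition \ref{ref5}. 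For (4), the same decomposition yields $\phi_2 g_{\varepsilon}f_{\varepsilon}=(\phi_2 g_{\varepsilon})(\phi_1 f_{\varepsilon})$, and since $\phi_2 g\in\mathcal{G}^{M,\mathcal{A}}(\Omega)\cap\mathcal{G}_C(\Omega)$, Proposition \ref{ref4} delivers a global bound of the required form for $\mathcal{F}(\phi_2 g_{\varepsilon})$; I then reproduce the $A$/$B$ region convolution argument of Proposition \ref{ref5} verbatim, relying once more on the subadditivity inequality of Proposition \ref{pro2}, to conclude $(x_0,\xi_0)\notin WF_{\mathcal{A}}^{M}(gf)$.
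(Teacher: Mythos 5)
Your proposal is correct, and on parts 1 and 4 it coincides with the paper's argument (for 4, the identity $\phi_2 g_\varepsilon f_\varepsilon=(\phi_2 g_\varepsilon)(\phi_1 f_\varepsilon)$, the global bound for $\mathcal{F}(\phi_2 g_\varepsilon)$ from Proposition \ref{ref4}, and the $A$/$B$ convolution split of Proposition \ref{ref5} are exactly what the paper does). The genuine differences are in 2 and 3. For 2 the paper only writes that the claim ``holds from the definition, Proposition \ref{ref5-1} and Lemma \ref{lem1}''; your compactness argument with a $\mathcal{D}^{M}$ partition of unity subordinate to the cover of a compact set containing the supports (available by $(H3)'$), together with the stability of the estimate under finite sums after passing to $a=\max a_i$, $b=\max b_i$, $k=\min k_i$ and the intersection of the finitely many cones, is precisely the content that the paper's citation suppresses, so you are supplying detail rather than diverging. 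For 3 the routes differ mildly: the paper works with first-order derivatives via the identity $\psi\,\partial f_\varepsilon=\partial(\psi f_\varepsilon)-(\partial\psi)f_\varepsilon$, invokes $WF_{\mathcal{A}}^{M}(\psi f)\subset WF_{\mathcal{A}}^{M}(f)$ for both resulting terms, and absorbs the single factor $|\xi|$ by comparing $\widetilde{M}(k_2\varepsilon^a|\xi|)$ with $\widetilde{M}(k_3\varepsilon^a|\xi|)$ for $k_3<k_2$ (the general $\alpha$ then follows by iteration); you instead treat arbitrary $\alpha$ in one stroke through the double cutoff $\phi_2\partial^{\alpha}f_\varepsilon=\phi_2\partial^{\alpha}(\phi_1 f_\varepsilon)$, pull out $(i\xi)^{\alpha}$, and absorb $|\xi|^{|\alpha|}$ via Propositions \ref{pro1} and \ref{pro2} at the cost of replacing $b$ by $b+a|\alpha|$, finishing with one more convolution against $\mathcal{F}(\phi_2)$ as in Proposition \ref{ref5}. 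Both are sound and of comparable length: your version avoids the induction on $|\alpha|$ and makes the dependence of the constants on $\alpha$ explicit, while the paper's version keeps the cutoff inside the derivative and so needs only the already-proved multiplication stability plus the elementary $k_3<k_2$ trick.
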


\begin{proof}
1) and 2) holds from the definition, Proposition \ref{ref5-1} and Lemma \ref%
{lem1}.

3) Let $\left( x_{0},\xi _{0}\right) \notin WF_{\mathcal{A}}^{M}\left(
f\right) $. Then $\exists \phi \in \mathcal{D}^{M}\left( \Omega \right)
,\phi \equiv 1$ on $\overline{U}$, where $U$ is a neighborhood of $x_{0}$,
there exist a conic neighborhood $\Gamma $ of $\xi _{0},$ and $\exists a\geq
0,\exists b>0,\exists k_{2}>0,\exists c_{1}>0,\exists \varepsilon _{0}\in %
\left] 0,1\right] ,$ such that $\forall \xi \in \Gamma ,\forall \varepsilon
\leq \varepsilon _{0},$
\begin{equation}
\left\vert \mathcal{F}\left( \phi f_{\varepsilon }\right) \left( \xi \right)
\right\vert \leq c_{1}\varepsilon ^{-b}\exp \left( -\widetilde{M}\left(
k_{2}\varepsilon ^{a}\left\vert \xi \right\vert \right) \right) .
\label{3-13}
\end{equation}%
We have, for $\psi \in \mathcal{D}^{M}\left( U\right) $ such that $\psi
\left( x_{0}\right) =1$,
\begin{eqnarray*}
\left\vert \mathcal{F}\left( \psi \partial f_{\varepsilon }\right) \left(
\xi \right) \right\vert &=&\left\vert \mathcal{F}\left( \partial \left( \psi
f_{\varepsilon }\right) \right) \left( \xi \right) -\mathcal{F}\left( \left(
\partial \psi \right) f_{\varepsilon }\right) \left( \xi \right) \right\vert
\\
&\leq &\left\vert \xi \right\vert \left\vert \mathcal{F}\left( \psi \phi
f_{\varepsilon }\right) \left( \xi \right) \right\vert +\left\vert \mathcal{F%
}\left( \left( \partial \psi \right) \phi f_{\varepsilon }\right) \left( \xi
\right) \right\vert .
\end{eqnarray*}%
As $WF_{\mathcal{A}}^{M}\left( \psi f\right) \subset WF_{\mathcal{A}%
}^{M}\left( f\right) ,$ so (\ref{3-13}) holds for both $\left\vert \mathcal{F%
}\left( \psi \phi f_{\varepsilon }\right) \left( \xi \right) \right\vert \ $%
and $\left\vert \mathcal{F}\left( \left( \partial \psi \right) \phi
f_{\varepsilon }\right) \left( \xi \right) \right\vert .$ Then
\begin{eqnarray*}
\left\vert \xi \right\vert \left\vert \mathcal{F}\left( \psi \phi
f_{\varepsilon }\right) \left( \xi \right) \right\vert &\leq &c\varepsilon
^{-b}\left\vert \xi \right\vert \exp \left( -\widetilde{M}\left(
k_{2}\varepsilon ^{a}\left\vert \xi \right\vert \right) \right) \\
&\leq &c^{\prime }\varepsilon ^{-b-a}\exp \left( -\widetilde{M}\left(
k_{3}\varepsilon ^{a}\left\vert \xi \right\vert \right) \right) ,
\end{eqnarray*}%
with some $c^{\prime }>0,k_{3}>0,(k_{3}<k_{2}),$ such that
\begin{equation*}
\varepsilon ^{a}\left\vert \xi \right\vert \leq c^{\prime }\exp \left(
\widetilde{M}\left( k_{2}\varepsilon ^{a}\left\vert \xi \right\vert \right) -%
\widetilde{M}\left( k_{3}\varepsilon ^{a}\left\vert \xi \right\vert \right)
\right)
\end{equation*}
for $\varepsilon $ sufficiently small. Hence (\ref{3-13}) holds for $%
\left\vert \mathcal{F}\left( \psi \partial f_{\varepsilon }\right) \left(
\xi \right) \right\vert ,$ which proves $\left( x_{0},\xi _{0}\right) \notin
WF_{\mathcal{A}}^{M}\left( \partial f\right) .$

4) Let $\left( x_{0},\xi _{0}\right) \notin WF_{\mathcal{A}}^{M}\left(
f\right) $. Then there exist $\phi \in \mathcal{D}^{M}\left( \Omega \right)
,\phi \left( x\right) =1$ on a neighborhood $U$ of $x_{0}$, a conic
neighborhood $\Gamma $ of $\xi _{0},$ and numbers $a_{1}\geq
0,b_{1}>0,k_{1}>0,c_{1}>0,\varepsilon _{1}\in \left] 0,1\right] ,$ such that
$\forall \varepsilon \leq \varepsilon _{1},\forall \xi \in \Gamma ,$
\begin{equation*}
\left\vert \mathcal{F}\left( \phi f_{\varepsilon }\right) \left( \xi \right)
\right\vert \leq c_{1}\varepsilon ^{-b_{1}}\exp \left( -\widetilde{M}\left(
k_{1}\varepsilon ^{a_{1}}\left\vert \xi \right\vert \right) \right) .
\end{equation*}%
Let $\psi \in \mathcal{D}^{M}\left( \Omega \right) $ and $\psi =1$ on $%
\mathrm{supp}\phi ,$ then $\mathcal{F}\left( \phi g_{\varepsilon
}f_{\varepsilon }\right) =\mathcal{F}\left( \psi g_{\varepsilon }\right)
\ast \mathcal{F}\left( \phi f_{\varepsilon }\right) .$ We have $\psi g\in
\mathcal{G}^{M,\mathcal{A}}\left( \Omega \right) ,$ then $\exists c_{2}>0,$ $%
\exists a_{2}\geq 0,\exists b_{2}>0,\exists k_{2}>0,\exists \varepsilon
_{2}>0,\forall \xi \in \mathbb{R}^{n},\forall \varepsilon \leq \varepsilon
_{2},$
\begin{equation}
\left\vert \mathcal{F}\left( \psi g_{\varepsilon }\right) \left( \xi \right)
\right\vert \leq c_{2}\varepsilon ^{-b_{2}}\exp \left( -\widetilde{M}\left(
k_{2}\varepsilon ^{a_{2}}\left\vert \xi \right\vert \right) \right) .
\end{equation}%
We have
\begin{eqnarray*}
\mathcal{F}\left( \phi g_{\varepsilon }f_{\varepsilon }\right) \left( \xi
\right) &=&\int_{A}\mathcal{F}\left( \phi f_{\varepsilon }\right) \left(
\eta \right) \mathcal{F}\left( \psi g_{\varepsilon }\right) \left( \xi -\eta
\right) d\eta + \\
&&+\int_{B}\mathcal{F}\left( \phi f_{\varepsilon }\right) \left( \eta
\right) \mathcal{F}\left( \psi g_{\varepsilon }\right) \left( \xi -\eta
\right) d\eta ,
\end{eqnarray*}%
where $A$ and $B$ are the same as in the proof of Proposition \ref{ref5}. By
the same reasoning we obtain the proof.
\end{proof}

A micolocalization of Proposition \ref{pseudlocProp}\ is expressed in the
following result.

\begin{corollary}
Let $P\left( x,D\right) =\sum\limits_{\left\vert \alpha \right\vert \leq
m}a_{\alpha }\left( x\right) D^{\alpha }$ be a generalized linear partial
differential operator with $\mathcal{G}^{M,\mathcal{A}}\left( \Omega \right)
$ coefficients, then
\begin{equation*}
WF_{\mathcal{A}}^{M}\left( P\left( x,D\right) f\right) \subset WF_{\mathcal{A%
}}^{M}\left( f\right) ,\forall f\in \mathcal{G}\left( \Omega \right)
\end{equation*}
\end{corollary}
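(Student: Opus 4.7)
The plan is to chain together the three properties 3) and 4) (together with the additive behaviour implicit in the definition of $WF_{\mathcal{A}}^{M}$) already proved in the preceding proposition. Write
\begin{equation*}
P(x,D)f=\sum_{|\alpha|\leq m}a_{\alpha}\,D^{\alpha}f,
\end{equation*}
so that $P(x,D)f$ is a finite sum of products of a $\mathcal{G}^{M,\mathcal{A}}(\Omega)$ coefficient with a derivative of $f$. The conclusion will follow once each of these two operations (differentiation; multiplication by a $\mathcal{G}^{M,\mathcal{A}}$ element) is controlled by the preceding proposition, and once a routine subadditivity of $WF_{\mathcal{A}}^{M}$ under finite sums is noted.

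First I would establish a short auxiliary observation, namely that $WF_{\mathcal{A}}^{M}(u+v)\subset WF_{\mathcal{A}}^{M}(u)\cup WF_{\mathcal{A}}^{M}(v)$ for $u,v\in\mathcal{G}(\Omega)$. If $(x_{0},\xi_{0})$ lies outside both sets, choose cut-offs $\phi_{1},\phi_{2}\in\mathcal{D}^{M}(\Omega)$ each equal to $1$ near $x_{0}$ together with conic neighbourhoods $\Gamma_{1},\Gamma_{2}$ of $\xi_{0}$ and exponents $a_{j},b_{j},k_{j}$ witnessing regularity for $u$ and $v$ respectively; then the product $\phi=\phi_{1}\phi_{2}\in\mathcal{D}^{M}(\Omega)$ still equals $1$ near $x_{0}$, and on $\Gamma_{1}\cap\Gamma_{2}$ the triangle inequality for $|\mathcal{F}(\phi(u+v)_{\varepsilon})(\xi)|$ together with the choice $a=\max(a_{1},a_{2})$, $b=\max(b_{1},b_{2})$, $k=\min(k_{1},k_{2})$ (and monotonicity of $\widetilde{M}$) provides the required bound. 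This extends inductively to any finite sum.

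Next I would iterate property 3) of the preceding proposition to get $WF_{\mathcal{A}}^{M}(D^{\alpha}f)\subset WF_{\mathcal{A}}^{M}(f)$ for every multi-index $\alpha$, by induction on $|\alpha|$ applied one partial derivative at a time. Since each coefficient $a_{\alpha}$ belongs to $\mathcal{G}^{M,\mathcal{A}}(\Omega)$, property 4) then yields
\begin{equation*}
WF_{\mathcal{A}}^{M}(a_{\alpha}D^{\alpha}f)\subset WF_{\mathcal{A}}^{M}(D^{\alpha}f)\subset WF_{\mathcal{A}}^{M}(f).
\end{equation*}
Combining this with the subadditivity observation,
\begin{equation*}
WF_{\mathcal{A}}^{M}(P(x,D)f)\subset\bigcup_{|\alpha|\leq m}WF_{\mathcal{A}}^{M}(a_{\alpha}D^{\alpha}f)\subset WF_{\mathcal{A}}^{M}(f),
\end{equation*}
which is the claim.

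I do not anticipate any real obstacle: all of the analytic work (cut-offs in $\mathcal{D}^{M}$, splitting of convolutions into the $A$- and $B$-regions, use of Komatsu's inequality \eqref{(KomIneq)} to control $\widetilde{M}$ on sums) has already been carried out in the proof of the preceding proposition. The only step not literally stated there is the subadditivity lemma for finite sums, but it is immediate from the definition as sketched above; once it is in hand, the corollary reduces to a one-line concatenation of properties 3) and 4).
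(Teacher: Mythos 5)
Your proof is correct and is essentially the argument the paper intends: the corollary is stated as an immediate consequence of parts 3) and 4) of the preceding proposition together with subadditivity of $WF_{\mathcal{A}}^{M}$ under finite sums, exactly as you chain them. The only step to phrase more carefully is your subadditivity observation: the witnesses give estimates for $\mathcal{F}\left( \phi _{1}u_{\varepsilon }\right) $ and $\mathcal{F}\left( \phi _{2}v_{\varepsilon }\right) $, not for the common cutoff $\phi _{1}\phi _{2}$, so before applying the triangle inequality you must pass from $\phi _{1}u$ to $\phi _{2}\left( \phi _{1}u\right) $ via Proposition \ref{ref5} (i.e. $\sum\nolimits_{\mathcal{A}}^{M}\left( \psi f\right) \subset \sum\nolimits_{\mathcal{A}}^{M}\left( f\right) $ for $\psi \in \mathcal{E}^{M}\left( \Omega \right) $, applied with $f=\phi _{1}u\in \mathcal{G}_{C}\left( \Omega \right) $ and $\psi =\phi _{2}$), rather than by the triangle inequality alone; since that proposition is already proved in the paper, this is an expository rather than a mathematical gap.
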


\begin{remark}
The reverse inclusion will give a generalized microlocal ultraregularity of
linear partial differential operator with coefficients in $\mathcal{G}^{M,%
\mathcal{A}}\left( \Omega \right) $. The first case of $\mathcal{G}^{\infty
} $-microlocal hypoellipticity has been studied in \cite{HorObPil}. A
general interesting problem of \ $\left( M,\mathcal{A}\right) $-generalized
microlocal elliptic ultraregularity is to prove the following inclusion%
\begin{equation*}
WF_{\mathcal{A}}^{M}\left( f\right) \subset WF_{\mathcal{A}}^{M}\left(
P\left( x,D\right) f\right) \cup Char(P),\forall f\in \mathcal{G}\left(
\Omega \right) ,
\end{equation*}%
where $P\left( x,D\right) $ is a generalized linear partial differential
operator with $\mathcal{G}^{M,\mathcal{A}}\left( \Omega \right) $
coefficients and $Char(P)$\ is the set of generalized characteristic points
of $P(x,D).$
\end{remark}

\section{Generalized H\"{o}rmander's theorem}

We extend the generalized H\"{o}rmander's result on the wave front set of
the product, the proof follow the same steps as the proof of theorem 26 in
\cite{benbou2}. Let $f,g\in \mathcal{G}\left( \Omega \right) ,$ we define%
\begin{equation}
WF_{\mathcal{A}}^{M}\left( f\right) +WF_{\mathcal{A}}^{M}\left( g\right)
=\left\{ \left( x,\xi +\eta \right) :\left( x,\xi \right) \in WF_{\mathcal{A}%
}^{M}\left( f\right) ,\left( x,\eta \right) \in WF_{\mathcal{A}}^{M}\left(
g\right) \right\} ,  \label{4-1}
\end{equation}%
We recall the following fundamental lemma, see \cite{HorKun} for the proof.

\begin{lemma}
\label{ch4-lem2}Let $\sum_{1}$, $\sum_{2}$ be closed cones in $\mathbb{R}%
^{m}\backslash \left\{ 0\right\} ,$ such that $0\notin \sum_{1}+\sum_{2}$ ,
then

i) $\overline{\sum\nolimits_{1}+\sum\nolimits_{2}}^{\mathbb{R}^{m}/\left\{
0\right\} }=\left( \sum\nolimits_{1}+\sum\nolimits_{2}\right) \cup
\sum\nolimits_{1}\cup \sum\nolimits_{2}$

ii) For any open conic neighborhood $\Gamma $ of $\sum\nolimits_{1}+\sum%
\nolimits_{2}$ in $\mathbb{R}^{m}\backslash \left\{ 0\right\} ,$ one can
find open conic neighborhoods of $\Gamma _{1},$ $\Gamma _{2}$ in $\mathbb{R}%
^{m}\backslash \left\{ 0\right\} $ of, respectively, $\sum\nolimits_{1},\sum%
\nolimits_{2}$ , such that
\begin{equation*}
\Gamma _{1}+\Gamma _{2}\subset \Gamma
\end{equation*}
\end{lemma}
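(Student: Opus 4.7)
The approach is to prove both parts by compactness-and-normalization arguments on the unit sphere $S^{m-1}$, leveraging the closedness of $\sum_1,\sum_2$ and the noncancellation hypothesis $0\notin\sum_1+\sum_2$.

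For part (i), I would dispatch $\supset$ quickly: $\sum_1+\sum_2$ lies trivially in its own closure, and for $x\in\sum_1$ one fixes any $y_0\in\sum_2$ and notes $x=\lim_{\epsilon\to 0^+}(x+\epsilon y_0)$ with $\epsilon y_0\in\sum_2$ by conicity, so $x+\epsilon y_0\in\sum_1+\sum_2$ converges to $x\neq 0$. For the reverse inclusion, given $z=\lim_n(x_n+y_n)$ with $x_n\in\sum_1$, $y_n\in\sum_2$, $z\neq 0$, I would split on boundedness. If both sequences are bounded, extract convergent subsequences $x_n\to x$, $y_n\to y$; closedness of $\sum_i$ in $\mathbb{R}^m\setminus\{0\}$ puts any nonzero limit in the corresponding cone, and the cases $(x\neq 0,\,y\neq 0)$, $(x=0,\,y\neq 0)$, $(x\neq 0,\,y=0)$ place $z$ in $\sum_1+\sum_2$, $\sum_2$, $\sum_1$ respectively, while $x=y=0$ is ruled out by $z\neq 0$.

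If instead some subsequence of $\{x_n\}$ is unbounded, boundedness of $x_n+y_n$ forces $\{y_n\}$ to be unbounded too. Normalizing $\hat{x}_n=x_n/|x_n|\in\sum_1$ and $\hat{y}_n=y_n/|y_n|\in\sum_2$, and passing to subsequential limits $\hat{x}\in\sum_1$, $\hat{y}\in\sum_2$ on $S^{m-1}$, I would divide $x_n+y_n=O(1)$ by $|x_n|$ and pass to the limit to conclude $|y_n|/|x_n|\to 1$ and $\hat{y}=-\hat{x}$. This yields $0=\hat{x}+\hat{y}\in\sum_1+\sum_2$, contradicting the hypothesis, so the unbounded case cannot occur.

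For part (ii), I would argue by contradiction with a shrinking family of open conic neighborhoods. Take $\Gamma_i^{(n)}$ to be the conic extension of the open $1/n$-tube around $\sum_i\cap S^{m-1}$. Failure of the conclusion produces $x_n\in\Gamma_1^{(n)}$, $y_n\in\Gamma_2^{(n)}$ with $x_n+y_n\notin\Gamma$; rescaling conically to $|x_n+y_n|=1$, extract a limit $z\in\Gamma^c$ with $|z|=1$ using that $\Gamma^c$ is closed in $\mathbb{R}^m\setminus\{0\}$. The dichotomy from (i) applies verbatim: the unbounded case again forces $0\in\sum_1+\sum_2$ and is excluded, while in the bounded case the subsequential limits of $x_n,y_n$, combined with the fact that the $1/n$-tubes shrink to $\sum_i\cap S^{m-1}$, place $z$ in $(\sum_1+\sum_2)\cup\sum_1\cup\sum_2$, which by part (i) is $\overline{\sum_1+\sum_2}^{\mathbb{R}^m\setminus\{0\}}\subset\Gamma$, producing the contradiction $z\in\Gamma\cap\Gamma^c$.

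The main obstacle is the boundary case in (ii) where one of the subsequential limits $x$ or $y$ vanishes: then $z$ lies only in $\sum_1$ or $\sum_2$, not in $\sum_1+\sum_2$ itself. This is exactly where part (i) becomes indispensable, since it guarantees that $\Gamma$ covers the whole closure $\overline{\sum_1+\sum_2}$. It is also the reason the neighborhoods $\Gamma_i^{(n)}$ must be built so that their shrinkage on the sphere genuinely drives subsequential limits back into $\sum_i$; the $1/n$-tube construction on $S^{m-1}$ is calibrated precisely for this.
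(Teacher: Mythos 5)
The paper itself gives no proof of this lemma (it is quoted from \cite{HorKun}), so there is no internal argument to compare against; your part (i) is the standard compactness--normalization proof and is correct: the inclusion $\Sigma_{1}\cup\Sigma_{2}\subset\overline{\Sigma_{1}+\Sigma_{2}}$ via $x+\epsilon y_{0}$, the bounded case via closedness of $\Sigma_{i}$ in $\mathbb{R}^{m}\setminus\{0\}$, and the exclusion of the unbounded case via unit-sphere limits forcing $0\in\Sigma_{1}+\Sigma_{2}$ are all sound.

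Part (ii), however, has a genuine gap exactly at the step you yourself single out as the crux. After placing the limit $z$ in $(\Sigma_{1}+\Sigma_{2})\cup\Sigma_{1}\cup\Sigma_{2}$, you assert that part (i) ``guarantees that $\Gamma$ covers the whole closure $\overline{\Sigma_{1}+\Sigma_{2}}$''. It does not: part (i) identifies the closure, but an open set containing $\Sigma_{1}+\Sigma_{2}$ need not contain that closure. Concretely, in $\mathbb{R}^{2}$ take $\Sigma_{1}=\{(t,0):t>0\}$, $\Sigma_{2}=\{(0,t):t>0\}$; then $\Sigma_{1}+\Sigma_{2}$ is the open first quadrant $Q$, and $\Gamma=Q$ is an open conic neighborhood of $\Sigma_{1}+\Sigma_{2}$ containing neither $\Sigma_{1}$ nor $\Sigma_{2}$ --- and for this $\Gamma$ the conclusion of (ii) actually fails, since any open cones $\Gamma_{1}\supset\Sigma_{1}$, $\Gamma_{2}\supset\Sigma_{2}$ contain vectors of the form $(1,-\delta)$ and $s(-\delta',1)$ whose sum leaves $Q$ for small $s>0$. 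So the lemma must be read (as it is in the source, and as it is used later in the paper, where $\Gamma_{0}$ can be chosen around the closed cone $\overline{\Sigma_{1}+\Sigma_{2}}$ avoiding $\xi_{0}$) with $\Gamma$ an open conic neighborhood of $\overline{\Sigma_{1}+\Sigma_{2}}=(\Sigma_{1}+\Sigma_{2})\cup\Sigma_{1}\cup\Sigma_{2}$; with that hypothesis your argument goes through, but the inclusion $\overline{\Sigma_{1}+\Sigma_{2}}\subset\Gamma$ is a hypothesis you must invoke, not a consequence of (i). A smaller repair: before normalizing to $|x_{n}+y_{n}|=1$ you must exclude $x_{n}+y_{n}=0$, which is a legitimate witness of failure since $0\notin\Gamma$; this is done by noting that $x_{n}+y_{n}=0$ for infinitely many $n$ would, by the same unit-sphere limit argument, give $0\in\Sigma_{1}+\Sigma_{2}$, contradicting the hypothesis.
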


The principal result of this section is the following theorem.

\begin{theorem}
Let $f,g\in \mathcal{G}\left( \Omega \right) $ , such that $\forall x\in
\Omega ,$%
\begin{equation}
\left( x,0\right) \notin WF_{\mathcal{A}}^{M}\left( f\right) +WF_{\mathcal{A}%
}^{M}\left( g\right) ,  \label{ch4-4-1}
\end{equation}%
then
\begin{equation}
WF_{\mathcal{A}}^{M}\left( fg\right) \subseteq \left( WF_{\mathcal{A}%
}^{M}\left( f\right) +WF_{\mathcal{A}}^{M}\left( g\right) \right) \cup WF_{%
\mathcal{A}}^{M}\left( f\right) \cup WF_{\mathcal{A}}^{M}\left( g\right)
\label{ch4-4-2}
\end{equation}
\end{theorem}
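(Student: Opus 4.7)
The plan is to argue by contrapositive. Fix a point $(x_0,\xi_0)$ outside the right-hand side of \eqref{ch4-4-2}, and show that $(x_0,\xi_0)\notin WF_{\mathcal{A}}^{M}(fg)$. Set $\Sigma_{1}=\sum\nolimits_{\mathcal{A},x_0}^{M}(f)$ and $\Sigma_{2}=\sum\nolimits_{\mathcal{A},x_0}^{M}(g)$; these are closed cones in $\mathbb{R}^n\setminus\{0\}$, and the assumption \eqref{ch4-4-1} yields $0\notin\Sigma_{1}+\Sigma_{2}$, so Lemma \ref{ch4-lem2} applies. The condition $(x_0,\xi_0)$ outside the RHS of \eqref{ch4-4-2} translates into $\xi_0\notin\Sigma_{1}\cup\Sigma_{2}\cup(\Sigma_{1}+\Sigma_{2})$, hence by Lemma \ref{ch4-lem2}(i) the point $\xi_0$ lies outside the closure of $\Sigma_{1}+\Sigma_{2}$ in $\mathbb{R}^n\setminus\{0\}$.

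Next, apply Lemma \ref{ch4-lem2}(ii) to produce open conic neighborhoods $\Gamma_{1}\supset\Sigma_{1}$, $\Gamma_{2}\supset\Sigma_{2}$ and an open conic neighborhood $W$ of $\xi_0$ such that $\overline{W}\cap\Gamma_{1}=\overline{W}\cap\Gamma_{2}=\emptyset$ and $W\cap(\Gamma_{1}+\Gamma_{2})=\emptyset$. By the definition of $\Sigma_i$ and by Proposition \ref{ref5}, one can then choose a single cutoff $\phi\in\mathcal{D}^{M}(\Omega)$ with $\phi\equiv 1$ on a neighborhood of $x_0$ such that both $\mathcal{F}(\phi f_{\varepsilon})$ satisfies the estimate \eqref{3-4} outside $\Gamma_1$ and $\mathcal{F}(\phi g_{\varepsilon})$ satisfies it outside $\Gamma_2$, with common (but possibly different for the two factors) exponents $a_i, b_i$ and constants $c_i, k_i$.

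With this setup, I would compute
\begin{equation*}
\mathcal{F}(\phi^{2}f_{\varepsilon}g_{\varepsilon})(\xi)=(2\pi)^{-n}\int \mathcal{F}(\phi f_{\varepsilon})(\eta)\,\mathcal{F}(\phi g_{\varepsilon})(\xi-\eta)\,d\eta
\end{equation*}
and, for $\xi\in W$, split the integration domain into three regions: $R_{1}=\{\eta\notin\Gamma_{1}\}$, $R_{2}=\{\eta\in\Gamma_{1},\ \xi-\eta\notin\Gamma_{2}\}$, and $R_{3}=\{\eta\in\Gamma_{1},\ \xi-\eta\in\Gamma_{2}\}$. The region $R_{3}$ is empty, since otherwise $\xi=\eta+(\xi-\eta)\in\Gamma_{1}+\Gamma_{2}$, contradicting $\xi\in W$. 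On $R_{1}$ (resp.\ $R_{2}$), the ultraregular decay of $\mathcal{F}(\phi f_{\varepsilon})$ (resp.\ $\mathcal{F}(\phi g_{\varepsilon})$) is available, and the other factor is controlled by the moderate polynomial bound inherited from $\mathcal{G}(\Omega)$, exactly as in the treatment of the region $B$ in the proof of Proposition \ref{ref5}.

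The main obstacle, and the place where the calculation becomes delicate, is combining these estimates to produce a single bound of the form $c\,\varepsilon^{-b}\exp(-\widetilde{M}(k\varepsilon^{a}|\xi|))$ for $\xi\in W$. Two ingredients do the work. First, the Komatsu inequality \eqref{(KomIneq)}, in its additive reformulation $\widetilde{M}(t_{1}+t_{2})\geq\widetilde{M}(t_{1}/H)+\widetilde{M}(t_{2}/H)-\ln A$, allows one to convert $\widetilde{M}(\cdot|\xi-\eta|)$ type factors appearing in $R_{1}$ into a product of decays in $|\xi|$ and $|\eta|$, where the $|\eta|$-piece secures integrability. Second, on the subregion of $R_{1}$ where $|\xi-\eta|$ is comparable to $|\xi|$, the polynomial $\varepsilon^{-q}|\eta|^{m}$ growth of $\mathcal{F}(\phi g_{\varepsilon})$ is absorbed into the exponential decay coming from the $|\xi-\eta|$-factor at the cost of enlarging $b$. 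The symmetric argument applies on $R_{2}$. Careful bookkeeping of the exponents $a, b$ (taking maxima of the $a_i, b_i$ and adjusting $k$ downward) produces the required estimate, showing $(x_0,\xi_0)\notin WF_{\mathcal{A}}^{M}(\phi^{2}fg)=WF_{\mathcal{A}}^{M}(fg)$ near $x_0$, as wanted. The structural outline is the same as that of Theorem 26 of \cite{benbou2}, only with the Denjoy--Carleman weight $\widetilde{M}$ in place of the Gevrey weight.
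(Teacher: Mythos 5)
Your global strategy (reduce to a single cutoff, write $\mathcal{F}(\phi f_\varepsilon\,\phi g_\varepsilon)$ as a convolution, decompose the $\eta$-integral according to cones, and recombine with the Komatsu inequality \eqref{(KomIneq)}) is the same as the paper's, but the key estimate is not carried out correctly: the two-region split $R_1=\{\eta\notin\Gamma_1\}$, $R_2=\{\eta\in\Gamma_1,\ \xi-\eta\notin\Gamma_2\}$ is too coarse. On $R_1$ the only exponential decay you have is $\exp\bigl(-\widetilde{M}(k_1\varepsilon^{a_1}|\eta|)\bigr)$ from $\mathcal{F}(\phi f_\varepsilon)(\eta)$, while $\mathcal{F}(\phi g_\varepsilon)(\xi-\eta)$ is only moderately bounded, i.e.\ by $c\,\varepsilon^{-q}(1+|\xi-\eta|)^m$ (note: its growth is in $|\xi-\eta|$, not $|\eta|$ as you wrote). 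On the portion of $R_1$ where $|\eta|\leq\tfrac12|\xi|$ this yields only $c\,\varepsilon^{-b}(1+|\xi|)^m\exp\bigl(-\widetilde{M}(k_1\varepsilon^{a_1}|\eta|)\bigr)$, and integrating in $\eta$ gives a bound that is merely polynomial in $|\xi|$ — nowhere near the required $c\,\varepsilon^{-b}\exp\bigl(-\widetilde{M}(k\varepsilon^{a}|\xi|)\bigr)$. Your ``two ingredients'' do not repair this: there is no $\widetilde{M}(\cdot\,|\xi-\eta|)$ decay factor available on $R_1$ to split with \eqref{(KomIneq)}, so the absorption you describe has nothing to act on. In short, whenever one factor is only moderately bounded you must guarantee that the decaying factor is evaluated at a point of size comparable to $|\xi|$, and your $R_1$ does not ensure this.

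The missing idea is precisely the extra dyadic-in-direction splitting used by the paper (its $I_{21}$, $I_{22}$, mirroring the region $B$ argument in Proposition \ref{ref5}): split $R_1$ further into $\{|\eta|\leq r|\xi|\}$ and $\{|\eta|\geq r|\xi|\}$. For $|\eta|\leq r|\xi|$ with $r$ small and the cones arranged so that $\overline{W}$ and $\overline{\Gamma_2}$ meet only at $0$, one gets $\xi-\eta\notin\Gamma_2$, so the estimate \eqref{3-4} for $\mathcal{F}(\phi g_\varepsilon)$ \emph{is} available there and its argument satisfies $|\xi-\eta|\geq(1-r)|\xi|$, which produces the decay in $|\xi|$ (the $f$-factor's decay in $|\eta|$ then only secures integrability). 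For $|\eta|\geq r|\xi|$ the decay $\exp\bigl(-\widetilde{M}(k_1\varepsilon^{a_1}|\eta|)\bigr)$ dominates $|\xi|$ after splitting via \eqref{(KomIneq)}, and the moderate factor must be controlled not by a fixed polynomial bound but by the tunable bound $c_2\varepsilon^{-b_2}\exp\bigl(\widetilde{M}(k_2\varepsilon^{a_2}|\cdot|)\bigr)$ valid for \emph{all} $k_2>0$, $a_2\geq0$, choosing $k_2$, $a_2$ after $k_1$, $a_1$, $\gamma$ so that the net exponent in $|\eta|$ is still negative — this tuning is also absent from your write-up. (A smaller point, which the paper also glosses: passing from the pointwise cones $\sum_{\mathcal{A},x_0}^{M}$ to a single cutoff $\phi$ with $\sum_{\mathcal{A}}^{M}(\phi f)\subset\Gamma_1$ and $\sum_{\mathcal{A}}^{M}(\phi g)\subset\Gamma_2$ needs the standard compactness/directedness argument based on Proposition \ref{ref5}, not just ``the definition of $\Sigma_i$''.) Without the additional subdivision and the choice of tunable constants, the claimed bound on $W$ does not follow, so as written the proof has a genuine gap.
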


\begin{proof}
Let $\left( x_{0},\xi _{0}\right) \notin \left( WF_{\mathcal{A}}^{M}\left(
f\right) +WF_{\mathcal{A}}^{M}\left( g\right) \right) \cup WF_{\mathcal{A}%
}^{M}\left( f\right) \cup WF_{\mathcal{A}}^{M}\left( g\right) ,$ then $%
\exists \phi \in D^{M}\left( \Omega \right) $, $\phi \left( x_{0}\right) =1,$
$\xi _{0}\notin \left( \sum_{\mathcal{A}}^{M}\left( \phi f\right) +\sum_{%
\mathcal{A}}^{M}\left( \phi g\right) \right) \cup \sum_{\mathcal{A}%
}^{M}\left( \phi f\right) \cup \sum_{\mathcal{A}}^{M}\left( \phi g\right) .$
From (\ref{ch4-4-1}) we have $0\notin \sum_{\mathcal{A}}^{M}\left( \phi
f\right) +\sum_{\mathcal{A}}^{M}\left( \phi g\right) $ then by lemma \ref%
{ch4-lem2} i), we have
\begin{equation*}
\xi _{0}\notin \left( \sum\nolimits_{\mathcal{A}}^{M}\left( \phi f\right)
+\sum\nolimits_{\mathcal{A}}^{M}\left( \phi g\right) \right) \cup
\sum\nolimits_{\mathcal{A}}^{M}\left( \phi f\right) \cup \sum\nolimits_{%
\mathcal{A}}^{M}\left( \phi g\right) =\overline{\sum\nolimits_{\mathcal{A}%
}^{M}\left( \phi f\right) +\sum\nolimits_{\mathcal{A}}^{M}\left( \phi
g\right) }^{\mathbb{R}^{m}\backslash \left\{ 0\right\} }
\end{equation*}%
Let $\Gamma _{0}$ be an open conic neighborhood of $\sum_{\mathcal{A}%
}^{M}\left( \phi f\right) +\sum_{\mathcal{A}}^{M}\left( \phi g\right) $ in $%
\mathbb{R}^{m}\backslash \left\{ 0\right\} $ such that $\xi _{0}\notin
\overline{\Gamma }_{0}$ then, from lemma \ref{ch4-lem2} ii), there exist
open cones $\Gamma _{1}$ and $\Gamma _{2}$ in $\mathbb{R}^{m}\backslash
\left\{ 0\right\} $ such that
\begin{equation*}
\sum\nolimits_{\mathcal{A}}^{M}\left( \phi f\right) \subset \Gamma _{1},%
\text{ }\sum\nolimits_{\mathcal{A}}^{M}\left( \phi g\right) \subset \Gamma
_{2}\text{ and }\Gamma _{1}+\Gamma _{2}\subset \Gamma _{0}
\end{equation*}%
Define $\Gamma =\mathbb{R}^{m}\backslash \overline{\Gamma }_{0},$ so
\begin{equation}
\Gamma \cap \Gamma _{2}=\emptyset \text{ and }\left( \Gamma -\Gamma
_{2}\right) \cap \Gamma _{1}=\emptyset  \label{ch4-4.3}
\end{equation}%
Let $\xi \in \Gamma $ and $\varepsilon \in \left] 0,1\right] $
\begin{eqnarray*}
\mathcal{F}\left( \phi f_{\varepsilon }\phi g_{\varepsilon }\right) \left(
\xi \right) &=&\left( \mathcal{F}\left( \phi f_{\varepsilon }\right) \ast
\mathcal{F}\left( \phi g_{\varepsilon }\right) \right) \left( \xi \right) \\
&=&\int_{\Gamma _{2}}\mathcal{F}\left( \phi f_{\varepsilon }\right) \left(
\xi -\eta \right) \mathcal{F}\left( \phi g_{\varepsilon }\right) \left( \eta
\right) d\eta +\int_{\Gamma _{2}^{c}}\mathcal{F}\left( \phi f_{\varepsilon
}\right) \left( \xi -\eta \right) \mathcal{F}\left( \phi g_{\varepsilon
}\right) \left( \eta \right) d\eta \\
&=&I_{1}\left( \xi \right) +I_{2}\left( \xi \right)
\end{eqnarray*}%
From (\ref{ch4-4.3}), $\exists a_{1}\geq 0,b_{1}\geq
0,k_{1}>0,c_{1}>0,\varepsilon _{1}\in \left] 0,1\right] ,$ such that $%
\forall \varepsilon \leq \varepsilon _{1},\forall \xi \in \Gamma _{2},$
\begin{equation*}
\left\vert \mathcal{F}\left( \phi f_{\varepsilon }\right) \left( \xi -\eta
\right) \right\vert \leq c_{1}\varepsilon ^{-b_{1}}\exp -\widetilde{M}\left(
k_{1}\varepsilon ^{a_{1}}\left\vert \xi \right\vert \right)
\end{equation*}%
we can show easily by the fact that $\left( \phi g_{\varepsilon }\right) \in
\mathcal{G}_{C}\left( \Omega \right) $ that $\forall a_{2}\geq 0,\forall
k_{2}>0,\exists b_{2}\geq 0,\exists c_{2}>0,,$ $\exists \varepsilon _{2}\in %
\left] 0,1\right] ,$ such that $\forall \varepsilon \leq \varepsilon _{2},$
\begin{equation*}
\left\vert \mathcal{F}\left( \phi g_{\varepsilon }\right) \left( \eta
\right) \right\vert \leq c_{2}\varepsilon ^{-b_{2}}\exp \widetilde{M}\left(
k_{2}\varepsilon ^{a_{2}}\left\vert \eta \right\vert \right) ,\forall \eta
\in \mathbb{R}^{n},
\end{equation*}%
Let $\gamma >0$ sufficiently small such that $\left\vert \xi -\eta
\right\vert \geq \gamma \left( \left\vert \xi \right\vert +\left\vert \eta
\right\vert \right) ,\forall \eta \in \Gamma _{2}.$ Hence for $\varepsilon
\leq \min \left( \varepsilon _{1},\varepsilon _{2}\right) ,$%
\begin{equation*}
\left\vert I_{1}\left( \xi \right) \right\vert \leq c_{1}c_{2}\varepsilon
^{-b_{1}-b_{2}}\int_{\Gamma _{2}}\exp \left( -\widetilde{M}\left(
k_{1}\varepsilon ^{a_{1}}\left\vert \xi -\eta \right\vert \right) +%
\widetilde{M}\left( k_{2}\varepsilon ^{a_{2}}\left\vert \eta \right\vert
\right) \right) d\eta
\end{equation*}%
from proposition \ref{pro2}, $\exists H>0,\exists A>0,\forall
t_{1}>0,\forall t_{2}>0,$
\begin{equation}
-\widetilde{M}\left( t_{1}+t_{2}\right) \leq -\widetilde{M}\left( \frac{t_{1}%
}{H}\right) -\widetilde{M}\left( \frac{t_{2}}{H}\right) +\ln A\text{ ,}
\end{equation}%
then
\begin{eqnarray*}
\left\vert I_{1}\left( \xi \right) \right\vert &\leq &c_{1}c_{2}\varepsilon
^{-b_{1}-b_{2}}\exp \left( -\widetilde{M}\left( \frac{k_{1}}{H}\varepsilon
^{a_{1}}\gamma \left\vert \xi \right\vert \right) \right) \\
&&\times \int_{\Gamma _{2}}\exp \left( -\widetilde{M}\left( \frac{k_{1}}{H}%
\varepsilon ^{a_{1}}\gamma \left\vert \eta \right\vert \right) +\widetilde{M}%
\left( k_{2}\varepsilon ^{a_{2}}\left\vert \eta \right\vert \right) \right)
d\eta \\
&\leq &c_{1}c_{2}\varepsilon ^{-b_{1}-b_{2}}\exp \left( -\widetilde{M}\left(
\frac{k_{1}}{H}\varepsilon ^{a_{1}}\gamma \left\vert \xi \right\vert \right)
\right) \\
&&\times \int_{\Gamma _{2}}\exp \left( -\widetilde{M}\left( \left( \frac{%
k_{1}}{H^{2}}\varepsilon ^{a_{1}}\gamma -k_{2}\varepsilon ^{a_{2}}\right)
\left\vert \eta \right\vert \right) \right) d\eta
\end{eqnarray*}%
take $k=\frac{\gamma k_{1}}{H}$ and $\frac{k_{1}}{H^{2}}\varepsilon
^{a_{1}}\gamma -k_{2}\varepsilon ^{a_{2}}>0,$ then $\exists b=b\left(
b1+b_{2},a_{1},a_{2},k_{1},k_{2},H\right) ,\exists c=c_{1}c_{2}$%
\begin{equation*}
\left\vert I_{1}\left( \xi \right) \right\vert \leq c\varepsilon ^{-b}\exp
\left( -\widetilde{M}\left( k\varepsilon ^{a_{1}}\left\vert \xi \right\vert
\right) \right)
\end{equation*}

Let $r>0,$
\begin{eqnarray*}
I_{2}\left( \xi \right) &=&\int_{\Gamma _{2}^{c}\cap \left\{ \left\vert \eta
\right\vert \leq r\left\vert \xi \right\vert \right\} }\mathcal{F}\left(
\phi f_{\varepsilon }\right) \left( \xi -\eta \right) \mathcal{F}\left( \phi
g_{\varepsilon }\right) \left( \eta \right) d\eta +\int_{\Gamma _{2}^{c}\cap
\left\{ \left\vert \eta \right\vert \geq r\left\vert \xi \right\vert
\right\} }\mathcal{F}\left( \phi f_{\varepsilon }\right) \left( \xi -\eta
\right) \mathcal{F}\left( \phi g_{\varepsilon }\right) \left( \eta \right)
d\eta \\
&=&I_{21}\left( \xi \right) +I_{22}\left( \xi \right)
\end{eqnarray*}%
Choose $r$ sufficiently small such that $\left\{ \left\vert \eta \right\vert
\leq r\left\vert \xi \right\vert \right\} \Longrightarrow \xi -\eta \notin
\Gamma _{1}.$ Then $\left\vert \xi -\eta \right\vert \geq \left( 1-r\right)
\left\vert \xi \right\vert \geq \left( 1-2r\right) \left\vert \xi
\right\vert +\left\vert \eta \right\vert $ , consequently $\exists
c>0,\exists a_{1},a_{2},b_{1},k_{1},k_{2}>0,\exists \varepsilon _{1}>0$ such
that $\forall \varepsilon \leq \varepsilon _{1},$
\begin{eqnarray*}
\left\vert I_{21}\left( \xi \right) \right\vert &\leq &c\varepsilon
^{-b}\int_{\Gamma _{2}}\exp \left( -\widetilde{M}\left( k_{1}\varepsilon
^{a_{1}}\left\vert \xi -\eta \right\vert \right) -\widetilde{M}\left(
k_{2}\varepsilon ^{a_{2}}\left\vert \eta \right\vert \right) \right) \\
&\leq &c\varepsilon ^{-b}\exp \left( -\widetilde{M}\left( k_{1}^{\prime
}\varepsilon ^{a_{1}}\left\vert \xi \right\vert \right) \right) \int \exp
\left( -\widetilde{M}\left( k_{1}\varepsilon ^{a_{1}}\left\vert \eta
\right\vert \right) -\widetilde{M}\left( k_{2}\varepsilon ^{a_{2}}\left\vert
\eta \right\vert \right) \right) d\eta \\
&\leq &c^{\prime }\varepsilon ^{-b^{\prime }}\exp \left( -\widetilde{M}%
\left( k_{1}^{\prime }\varepsilon ^{a_{1}}\left\vert \xi \right\vert \right)
\right)
\end{eqnarray*}%
If $\left\vert \eta \right\vert \geq r\left\vert \xi \right\vert ,$ we have $%
\left\vert \eta \right\vert \geq \dfrac{\left\vert \eta \right\vert
+r\left\vert \xi \right\vert }{2}$ , and then $\exists c>0,\exists
a_{1},b_{1},k_{1}>0,\forall a_{2},k_{2}>0,\exists b_{2}>0,\exists
\varepsilon _{2}>0$ such that $\forall \varepsilon \leq \varepsilon _{2}$%
\begin{eqnarray*}
\left\vert I_{21}\left( \xi \right) \right\vert &\leq &c\varepsilon
^{-b_{1}-b_{2}}\int_{\Gamma _{2}}\exp \left( \widetilde{M}\left(
k_{2}\varepsilon ^{a_{2}}\left\vert \xi -\eta \right\vert \right) -%
\widetilde{M}\left( k_{1}\varepsilon ^{a_{1}}\left\vert \eta \right\vert
\right) \right) d\eta \\
&\leq &c\varepsilon ^{-b_{1}-b_{2}}\int_{\Gamma _{2}}\exp \left( \widetilde{M%
}\left( k_{2}\varepsilon ^{a_{2}}\left\vert \xi -\eta \right\vert \right) -%
\widetilde{M}\left( \frac{k_{1}}{2}\varepsilon ^{a_{1}}\left\vert \eta
\right\vert +\frac{k_{1}r}{2}\varepsilon ^{a_{1}}\left\vert \xi \right\vert
\right) \right) d\eta \\
&\leq &c\varepsilon ^{-b_{1}-b_{2}}\exp \left( -\widetilde{M}\left( \frac{%
k_{1}r}{2H}\varepsilon ^{a_{1}}\left\vert \xi \right\vert \right) \right)
\int_{\Gamma _{2}}\exp \left( \widetilde{M}\left( k_{2}\varepsilon
^{a_{2}}\left\vert \xi -\eta \right\vert \right) -\widetilde{M}\left( \frac{%
k_{1}}{2H}\varepsilon ^{a_{1}}\left\vert \eta \right\vert \right) \right)
d\eta
\end{eqnarray*}%
if we take $k_{2},\frac{1}{a_{2}}$ sufficiently smalls, we obtain $\exists
a,b,c>0,\exists \varepsilon _{3}>0$, such that $\forall \varepsilon \leq
\varepsilon _{3}$
\begin{equation*}
\left\vert I_{21}\left( \xi \right) \right\vert \leq c\varepsilon ^{-b}\exp
\left( -\widetilde{M}\left( k\varepsilon ^{a}\left\vert \xi \right\vert
\right) \right) ,
\end{equation*}%
which finishes the proof.
\end{proof}


\begin{thebibliography}{99}
\bibitem{AntRod} ANTONEVICH A. B., RADYNO Ya. V., On general method of
constructing algebras of new generalized functions. Soviet. Math. Dokl.,
vol. 43:3, p. 680-684, 1991.

\bibitem{benbou1} BENMERIEM K., BOUZAR C., Affine ultraregular Generalized
function. Banach Center Publications, vol. 88, p. 39-53, 2010.

\bibitem{benbou2} BENMERIEM K., BOUZAR C., Generalized Gevrey
ultradistributions. New York J. Math. 15, p. 37-72. 2009.

\bibitem{benbou3} BENMERIEM K., BOUZAR C., Ultraregular Generalized function
of Colombeau type, J. Math. Sci. Univ. Tokyo, Vol. 15, No. 4, p. 427---447,
2008.

\bibitem{Col} COLOMBEAU J. F., New generalized functions and multiplication
of distributions. North-Holland, 1984.

\bibitem{Del} A. DELCROIX. Regular rapidly decreasing nonlinear generalized
functions. Application to microlocal regularity, J. Math. Anal. Appl., 327,
p. 564-584, 2007.

\bibitem{GKOS} GROSSER M., KUNZINGER M., OBERGUGGENBERGER M., STEINBAUER R.,
Geometric theory of generalized functions. Kluwer, 2001.

\bibitem{HorPDOT1} H\"{O}RMANDER L., The analysis of linear partial
differential operators, Vol. 1 : Distribution theory and Fourier analysis.
Springer, 2nd edition 1990.

\bibitem{HorKun} H\"{O}RMANN G.,~KUNZINGER M.,~Microlocal properties of
basic operations in Colombeau algebras.\ J.\ Math.\ Anal.\ Appl., 26:1, p.
254-270, 2001.

\bibitem{HorObPil} H\"{O}RMANN G., OBERGUGGENBERGER M., PILIPOVIC S.,
Microlocal hypoellipticity of linear differential operators with generalized
functions as coefficients. Trans. Amer. Math. Soc., 358, p. 3363-3383, 2006.

\bibitem{Kom} KOMATSU H., Ultradistributions I, J. Fac. Sci. Univ. Tokyo,
Sect. IA 20, p. 25-105, 1973.

\bibitem{Marti} MARTI J. A., Regularity, local and microlocal analysis in
theories of generalized functions. Acta Appl Math, 105, p. 267--302, 2009.

\bibitem{NPS} NEDELJKOV M., PILIPOVIC S., SCARPALEZOS D.. The linear theory
of Colombeau generalized functions, Longman, 1998.

\bibitem{Ober} OBERGUGGENBERGER M., Multiplication of distributions and
applications to partial differential equations, Longman, 1992.

\bibitem{Ober2} OBERGUGGENBERGER M., Regularity theory in Colombeau
algebras. Bull. T. CXXXIII. Acad. Serbe Sci. Arts, Cl. Sci. Math. Nat. Sci.
Math., 31, p. 2006.
\end{thebibliography}
\end{document}